\documentclass[11pt,a4paper]{article}

\usepackage[utf8]{inputenc}
\usepackage[T1]{fontenc}
\usepackage{amsmath, amsthm, amssymb, amsfonts}
\usepackage{geometry}
\usepackage{hyperref}
\usepackage{mathtools}
\usepackage{enumitem}
\usepackage{empheq}
\usepackage{authblk}
\usepackage[natbib,style=iso-numeric,sorting=none]{biblatex}
\usepackage{bm}
\newcommand{\bTheorem}[1]{
	\begin{Theorem} \label{T#1} }
	\newcommand{\eT}{\end{Theorem}}

\newcommand{\bProposition}[1]{
	\begin{Proposition} \label{P#1}}
	\newcommand{\eP}{\end{Proposition}}

\newcommand{\bLemma}[1]{
	\begin{Lemma} \label{L#1} }
	\newcommand{\eL}{\end{Lemma}}

\newcommand{\bCorollary}[1]{
	\begin{Corollary} \label{C#1} }
	\newcommand{\eC}{\end{Corollary}}

\newcommand{\bRemark}[1]{
	\begin{Remark} \label{R#1} }
	\newcommand{\eR}{\end{Remark}}

\newcommand{\bDefinition}[1]{
	\begin{Definition} \label{D#1} }
	\newcommand{\eD}{\end{Definition}}

\newcommand{\prst}{\mathbb{P}}
\newcommand{\Ds}{\mathbb{D}_x}

\newcommand{\bfphi}{\boldsymbol{\varphi}}

\newcommand{\lnorm}{\|}
\newcommand{\rnorm}{\|}
\newcommand{\Td}{\mathbb{T}^3}
\newcommand{\bFormula}[1]{
	\begin{equation} \label{#1}}
	\newcommand{\eF}{\end{equation}}

\newcommand{\vrn}{\vr_n}
\newcommand{\vun}{\vu_n}

\newcommand{\Ov}[1]{\overline{#1}}

\newcommand{\Curl}{{\bf curl}_x}

\newcommand{\aleq}{\stackrel{<}{\sim}}

\newcommand{\trn}{\widetilde{\vr}_n}
\newcommand{\tun}{\widetilde{\vu}_n}
\newcommand{\tBn}{\widetilde{\vc{B}}_n}
\newcommand{\tWni}{\widetilde{W}_{n}^1}
\newcommand{\tWnii}{\widetilde{W}_{n}^2}

\newcommand{\vr}{\varrho}

\newcommand{\vu}{\vc{u}}
\newcommand{\vm}{\vc{m}}

\newcommand{\vc}[1]{{\bf #1}}

\newcommand{\Div}{{\rm div}_x}
\newcommand{\Grad}{\nabla_x}

\newcommand{\dx}{\,{\rm d} {x}}

\newcommand{\dt}{\,{\rm d} t }

\newcommand{\intTd}[1]{\int_{\mathbb{T}^3} #1 \, \dx}

\newcommand{\D}{{\rm d}}

\newcommand{\R}{\mathbb{R}}

\newcommand{\expe}[1]{ \mathbb{E} \left[ #1 \right] }

\newcommand{\br}{ \nonumber \\ }

\newtheorem{theorem}{Theorem}[section]
\newtheorem{definition}[theorem]{Definition}
\newtheorem{lemma}[theorem]{Lemma}
\newtheorem{corollary}[theorem]{Corollary}
\newtheorem{remark}[theorem]{Remark}

\title{\textbf{On the existence of a weak martingale solution for a stochastic magnetohydrodynamics system with noise acting in the magnetic field}}
\author[1,2]{Jiří Púček\thanks{The work of J. P. was supported by the Czech Science Foundation (GAČR), Grant Agreement No. 24-11034S. The Institute of Mathematics of the Academy of Sciences of the Czech Republic is supported by RVO:67985840.}}

\affil[1]{\raggedright \hangindent=0.5em
\hangafter=1 Charles University, Faculty of Mathematics and Physics, Mathematical Institute, Sokolovská 83, 186 75 Prague 8, Czech Republic.}
\affil[2]{\raggedright \hangindent=0.5em
\hangafter=1 Institute of Mathematics of the Academy of Sciences of the Czech Republic, Žitná 25, CZ-11567 Prague 1, Czech Republic.}
\date{}

\begin{document}

\maketitle

% --- ABSTRACT ---
\begin{abstract}
We prove the existence of solutions to the stochastic  magnetohydrodynamics (MHD) system, where randomness is introduced through random initial data and a stochastic integral appearing solely in the induction equation, while the fluid equations remain deterministic. We focus on the case where the adiabatic exponent $\gamma$ satisfies $\gamma > \frac{3}{2}$. The existence proof is carried out using the penalization method. We define the notion of a martingale solution and establish sufficient conditions for its existence. The proof then proceeds by means of the stochastic compactness method. Using an energy inequality, we derive a priori estimates in terms of expectation. Due to the stochastic nature of the problem, we demonstrate convergence in law of the penalized solutions and verify that the limiting object is a martingale solution.

\medskip
\noindent\textbf{Keywords:} Stochastic MHD, compressible flow, penalization method, stochastic compactness method.
\end{abstract}

% --- INTRODUCTION ---
\section{Introduction}
The study of magnetohydrodynamics (MHD) plays a fundamental role in understanding physical phenomena in plasma physics, astrophysics, and engineering applications such as fusion reactors (see, e.g., \cite{davidson2001mhd}). However, in many real-world settings, such systems are not purely deterministic. External noise and measurement uncertainty (we refer to \cite{taylor1997error}) motivate the incorporation of random perturbations into the governing equations.

The main motivation behind the model considered in the study is the description of the star, where the source term is supported in the stellar core. Unfortunately, in actual models, it is not exactly known how the processes in the star core work since stellar cores cannot be observed directly (see, e.g.,~\cite{clayton1983stellar}). In such contexts, a stochastic formulation of the PDE system can become useful.

Motivated by this setting, we investigate a three-dimensional stochastic MHD system in which randomness is introduced through random initial data and a stochastic integral appearing solely in the induction equation. In contrast, the continuity and momentum equations remain deterministic. Specifically, we study a compressible Navier--Stokes system immersed in a magnetic field that evolves under stochastic noise. The magnetic field is defined on a larger periodic domain $\mathbb{T}^3$, while the fluid occupies only a subdomain $Q_F$.

To date, relatively little work has been done on problems of this type. Similar problems have recently been studied by H. Wang in \cite{WangH21} and in the follow-up work \cite{WangH}. The first of these works contains issues related to boundary conditions and incompatibility with solenoidal constraints. We address these shortcomings by formulating the stochastic forcing in the induction equation in a way that preserves the divergence-free property of the magnetic field and is compatible with periodic boundary conditions. Following Wang's work \cite{WangH21}, we therefore consider a setting in which the fluid equations (i.e. the continuity and momentum) are defined on the periodic domain $\Td$, accompanied by a friction term in the momentum equation. This term will serve as a penalization outside the fluid domain $Q_F$ and will ensure that the fluid density and velocity vanish outside this region in the limit (see Figure \ref{fig:figure1} in Section \ref{k}). Unlike the deterministic case, where convergence in space and time is sufficient, the stochastic nature of our system requires convergence in law as well. For this reason, we employ the stochastic compactness method to establish the convergence.

The main mathematical objective is to establish the existence of a weak martingale solution to the MHD system under suitable assumptions on the initial data for an adiabatic exponent $\gamma > \frac{3}{2}$. The key novelties include:
\begin{itemize}[itemsep=2pt]
  \item The fluid is immersed in the magnetic field, whereas most existing results assume that both are defined on the same domain.
  \item The source term for the induction equation is a random process and acts outside the fluid domain (see Section \ref{k})
  \item Domain penalization is used for a stochastic PDE system.
  \item The stochastic forcing is formulated to preserve the divergence-free constraint on the magnetic field.
\end{itemize}

The paper is organized as follows: in Section \ref{k} we present the classical formulation and motivate the introduction of randomness in the system, leading to the stochastic PDE. In Section \ref{sec:mainresults} we define the weak martingale solution of our problem, as well as the penalized problem and present the main results of this paper. Finally, in the last Section \ref{proof}, we provide the proof.

\section{Classical formulation of the problem}
\label{k}
\begin{figure}[h!]
	\begin{center}
		\includegraphics[scale=0.37]{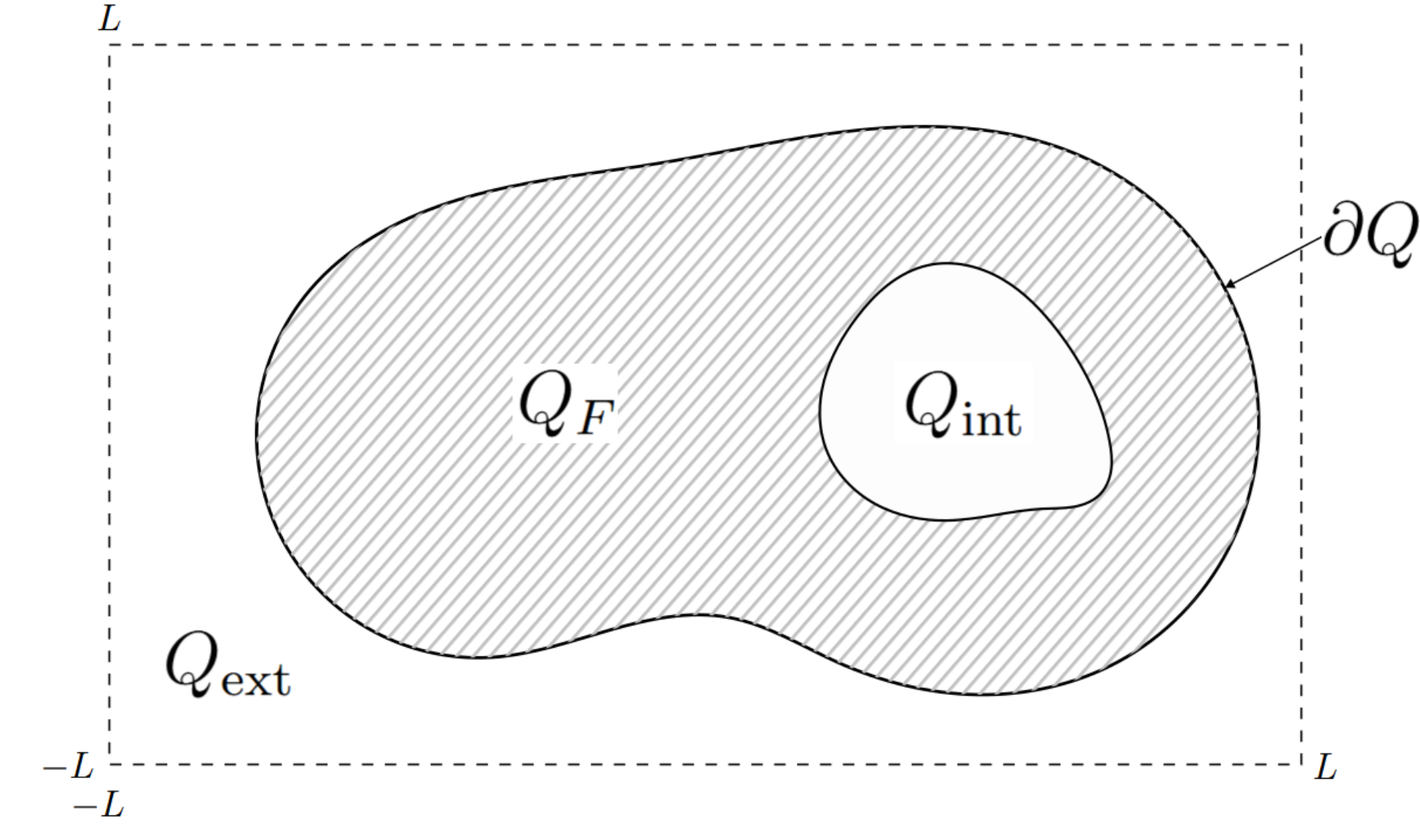}
	\end{center}
    \caption{\label{fig:figure1} Sketch of the domains $Q_{\text{ext}}$, $Q_F$, $Q_{\text{int}}$}
\end{figure}
We consider $Q \subset \R^3$ a bounded domain of class $C^{2+\xi}$ for $\xi>0$ and suppose that
\begin{equation} \label{g1}
Q \subset \Ov{Q} \subset \mathbb{T}^3,\ 
\mathbb{T}^3 = \left( [-L, L] \Big|_{\{ -L; L \}} \right)^3, \ L>0.
\end{equation}

In addition, we consider another domain 
$Q_{\rm int}$, again of class $C^{2+\xi}$, such that
\begin{equation} \label{g2}
Q_{\rm int} \subset \Ov{Q}_{\rm int} \subset Q. 
\end{equation}
It should be pointed out that the class of domains $Q$ and $Q_{\rm int}$ can be relaxed to the Lipschitz class. This fact will be addressed after presenting the main result.
\\

As can be seen in Figure \ref{fig:figure1}, we denote 
\begin{equation} \label{g3}
Q_{\rm ext} = \Td \setminus \Ov{Q},\ 
Q_F = Q \setminus \Ov{Q}_{\rm int}. 
\end{equation}	

The fluid will occupy the domain $Q_F$, and the magnetic field will be active in the whole space $\Td$. One can interpret the domain $Q$ as a star in a vacuum, and the domain $Q_{\rm int}$ as the star core, where the source term is supported. The unknowns are:

$\vr = \vr(t,x), t \in [0,T], x \in Q_F$ \dotfill 
fluid mass density 

$\vu = \vu(t,x), t \in [0,T], x \in Q_F$ \dotfill fluid velocity 

$\vc{B} = \vc{B}(t,x), t \in [0,T], x \in \Td$ \dotfill 
magnetic field
\\

For the fluid equation, we have the standard equation of continuity and the momentum equation that, in stochastic notation, reads
\begin{equation} \label{f1f}
\D \vr + \Div (\vr \vu) \dt = 0 \ \mbox{in}\ (0,T) \times Q_F,
\end{equation}
\begin{equation}\label{f2f}
    \D (\vr \vu) + \Div (\vr \vu \otimes \vu) \dt + \Grad p \dt = \Div \mathbb{S} \dt +\vr \Grad G \dt + \vc{f} \dt \ \mbox{in}\ (0,T) \times Q_F.
\end{equation}

Throughout the proof, we have to ensure that the density is at least non-negative. In fact, the strict positivity of $\vr$ in the context of weak solutions is a major open problem.

The \emph{pressure} $p$ is given by the equation of state 
\begin{equation} \label{f3}
	p =p(\vr)=a\vr^\gamma, \ a>0.
\end{equation}
Without loss of generality, we shall choose the constant $a=1$. The physically relevant values of $\gamma$ for the isentropic gas are in the range $1 < \gamma \leq\frac{5}{3}$, where the case $\gamma=\frac{5}{3}$ corresponds to the plasma (we refer to \cite{Battaner1996}). The best theoretical results for the isentropic system guarantee the global existence of weak solutions for $\gamma > \frac{d}{2}$, where $d$ is the dimension (see, for example, \cite{FeireislEduard2003DoVC}). Therefore, in three dimensions, we assume throughout this paper $\gamma>\frac{3}{2}$.

We consider Newtonian fluids with the \emph{viscous stress 
tensor} 
\begin{equation*}
\mathbb{S} = \mathbb{S}(\Ds \vu) = \nu \left( \Grad \vu + \Grad^t \vu - \frac{2}{3} \Div \vu \mathbb{I} \right) + \lambda \Div \vu \mathbb{I}, \ 
\Ds \vu = \frac{1}{2} \left( \Grad \vu + \Grad^t \vu \right),
	\end{equation*}
where $\nu > 0$ is the \emph{shear viscosity coefficient}, and $\lambda \geq 0$ 
is the \emph{bulk viscosity coefficient}.

The external \emph{volume force} $\vc{g}$ is prescribed. We consider the gravitational force
\begin{equation*}
	\vc{g} = \Grad G,\ G = G(x) \in W^{1,\infty}(Q_F), 
\end{equation*}	
where $G$ is the gravitational potential. 

If the fluid is electrically conducting, the driving force $\vc{f}$ is the \emph{Lorentz force}
\begin{equation} \label{f6}
\vc{f} =  \Curl \vc{B} \times \vc{B}.
\end{equation}

Regarding the boundary conditions, we impose the no-slip condition on the velocity $\vu$
\begin{equation} \label{f7}
	\vu|_{\partial Q_F} = 0. 
	\end{equation}
	
In accordance with \eqref{f7}, we extend the velocity to be zero outside $Q_F$. In particular, we set 
\begin{equation} \label{f8} 
	\vu|_{(0,T) \times \Td\setminus Q_F} = 0.
\end{equation}

The time evolution of the magnetic field $\vc{B}$ is determined by Maxwell's equation
\begin{equation} \label{m1}
	\partial_t \vc{B} + \Curl \vc{E} = 0,\ \Div \vc{B} = 0 \ \mbox{in}\ (0,T) 
	\times \Td,  
\end{equation}	
where the electric field $\vc{E}$ obeys the \emph{Ohm--Ampère law},
\begin{equation} \label{m2}
	\Curl \vc{B} = \frac{1}{\eta} \left( \vc{E} + (\vu \times \vc{B} \right) + \vc{j}), 
\end{equation}
where $\vc{B}$ is the magnetic field, and $\vc{j}$ is a given electric current.

The resistivity of the material $\eta$ generally varies across the subdomains
\begin{equation} \label{m3}
	\eta (x) = \left\{ \begin{array}{l}  \eta_{\rm int} > 0 \ \mbox{for}\ 
		x \in Q_{\rm int}, \\ 
		\eta_{F} > 0 \ \mbox{for}\ 
		x \in Q_{F},\\  
		\eta_{\rm ext} > 0 \ \mbox{for}\ 
		x \in Q_{\rm ext}.
	\end{array}	\right.	\ \mbox{deterministic constants}
\end{equation}
Therefore, $\eta$ is a positive piecewise constant function.

Observe that in the whole model, we made a simplification and considered permeability to be equal to one; thus, this parameter does not appear in the induction equation or in the Lorentz force \eqref{f6}.

Accordingly, equations \eqref{m1} and \eqref{m2} can be written in a concise form 
\begin{equation} \label{m5}
	\partial_t \vc{B} + \Curl (\vc{B} \times \vu) + \Curl \Big( \eta\,
	\Curl \vc{B} \Big) = \Curl \vc{j} \ \mbox{in}\ (0,T) \times \Td. 
\end{equation}	
	
The driving force in \eqref{m5} is represented by $\Curl \vc{j}$. However, in view of our interpretation of a star, the precise physical mechanism occurring in the stellar core is not known, and any measurement related to the core would naturally be subject to uncertainty, motivating the introduction of a stochastic forcing term. Therefore, our goal is to consider the random driving force in the form of a stochastic integral, which represents the noise in the system. In particular, we consider the driving force to be in the form
\begin{equation*}
\alpha \vc{B} \D W^1 + \Curl \vc{j} \D W^2,
\end{equation*}
where $W^1$, $W^2$ are independent one-dimensional Wiener processes, $\alpha$ is a real deterministic constant, and $\vc{j}$ is a random process such that $\prst$-a.s.
\begin{equation} \label{f5}
\alpha \in \R,\ \vc{j} \in L^\infty(0,T; W^{1,2}_0 (Q_{\rm int}; \R^3)) \ \mbox{such that} \ \mathbb{E}\| \Curl\vc{j}\|^{2r}_{L^\infty(0,T;L^2(Q_{\rm int};\R^3))}<\infty,
\end{equation}

This gives us the induction equation in the stochastic setting
\begin{align} 
\D \vc{B} + \Curl (\vc{B} \times \vu) \dt + \Curl (\eta \, \Curl \vc{B}) \dt &= \alpha \vc{B} \D W^1 + \Curl \vc{j} \D W^2, \br 
\Div \vc{B} &= 0.
\label{f4f}
\end{align}
Note that in our case, it is compatible with the solenoidal condition as long as $\alpha$ is constant. Indeed, formally by applying divergence to \eqref{f4f}, we get 
\[
\D (\Div \vc{B}) = \alpha (\Div \vc{B}) \, \D W^1,\ \Div \vc{B}(0, \cdot) = \Div \vc{B}_0,
\]
which implies
\begin{equation*}
	\Div \vc{B} (t, \cdot) = 0 \ \mbox{as long as}\ \Div \vc{B}_0 = 0.
\end{equation*}

Thus, our MHD system \eqref{f1f}, \eqref{f2f}, \eqref{f4f} of the stochastic partial differential equation is
\begin{equation}
\boxed{
\begin{aligned}
\D \vr + \Div (\vr \vu) \dt  &= 0, 
&& \text{in } (0,T)\times Q_F, \\
\D (\vr \vu) + \Div (\vr \vu \otimes \vu) \dt + \Grad p \dt 
&= \Div \mathbb{S} \dt +\vr \Grad G \dt + \vc{f} \dt,
&& \text{in } (0,T)\times Q_F, \\
\D \vc{B} + \Curl (\vc{B} \times \vu) \dt + \Curl (\eta \, \Curl \vc{B}) \dt  
&= \alpha \vc{B} \D W^1 + \Curl \vc{j} \D W^2, 
&& \text{in } (0,T)\times \mathbb{T}^3, \\
\Div \vc{B} &= 0,
&& \text{in } (0,T)\times \mathbb{T}^3.
\end{aligned}
}
\label{eq:MHD}
\end{equation}
with boundary condition \eqref{f7} (or, more precisely, with condition \eqref{f8}) and initial conditions that will be discussed later.

\subsection{Energy estimates}
The energy balance is (formally) obtained by applying 
It\^o's formula to the induction equation:

\begin{align}
	\frac{1}{2} \D |\vc{B}|^2  &+  
	\Curl (\vc{B} \times \vu ) \cdot \vc{B}  \dt  + 
	\Curl (\eta \Curl \vc{B} ) \cdot \vc{B} \dt \br
	&= \frac{\alpha^2}{2} |\vc{B}|^2  \dt 
	+ \frac{1}{2} |\Curl \vc{j}|^2 \dt 
	+ \alpha |\vc{B}|^2(t, \cdot) \D W^1 + \ \Curl \vc{j} \cdot 
	\vc{B} \D W^2.   
	\label{f9f}
\end{align}

Similarly, we multiply the momentum equation by $\vu$ to obtain the balance of kinetic energy. Since there is no stochastic integral, the energy balance is standard: 
\begin{align} 
\D E(\vr, \vr \vu) + \Div \Big[  (E(\vr, \vr \vu) + p) \vu \Big] \dt 
 =& \Div (\mathbb{S} \cdot \vu) \dt -\mathbb{S}: \Grad \vu \dt\br
& +(\Curl \vc{B} \times \vc{B}) \cdot \vu \dt\br
& +\vr \vu \cdot \Grad G \dt,
\label{f10}
\end{align}	
where the energy is defined as
\[
E(\varrho, \mathbf{m}) =
\begin{cases}
\frac{1}{2}\frac{|\mathbf{m}|^2}{\varrho} + P(\varrho)
& \text{if } \varrho > 0, \\[0.5em]
P(\varrho)
& \text{if } \varrho = 0,\ \mathbf{m} = 0, \\[0.5em]
\infty & \text{otherwise,}
\end{cases}
\]
where $P=P(\vr)$ is the pressure potential that satisfies
\[
P'(\vr) \vr - P(\vr) = p(\vr), 
\]
especially for our case, we shall see that
\begin{equation*}
    P(\vr)=\frac{1}{\gamma-1}\vr^\gamma.
\end{equation*}

The energy $E: \R^7 \to [0, \infty] $ is a convex lower semi-continuous function. Summing up \eqref{f9f} and \eqref{f10}, we get
\begin{align}\label{energ_nerovnost}
	& \D \left(E+\frac{1}{2}|\vc{B}|^2\right)  + 
	\Curl (\eta \Curl \vc{B} ) \cdot \vc{B} \dt + \Div \Big[  (E(\vr, \vr \vu) + p) \vu -\mathbb{S} \cdot \vu\Big] \dt \br
	&= \frac{1}{2}\left( \alpha^2|\vc{B}|^2 +|\Curl \vc{j}|^2 \right)\dt 
	-\mathbb{S}: \Grad \vu \dt 
    + \alpha |\vc{B}|^2 \D W^1 + \ \Curl \vc{j} \cdot 
	\vc{B} \D W^2.   
\end{align}
The energy balance plays a crucial role, as it provides a priori estimates and will be incorporated into the definition of the weak martingale solution to be defined in the next section. Since uniqueness of solutions is not expected in this framework, incorporating the energy balance into the solution concept helps compensate for the lack of uniqueness.
% --- MAIN RESULTS ---
\section{Main Results}
\label{sec:mainresults}
Suppose that we are given initial data 
\begin{align} 
\vr_0 &\in L^1 (Q_F),\ \vr_0 \geq 0,\  0 < \underline{\vr} \leq \int_{Q_F} 
\vr_0 \ \dx \leq \Ov{\vr},\ \underline{\vr}, \Ov{\vr} \ \ \mbox{deterministic constants} \br 
(\vr \vu)_0 &\in L^1(Q_F), \
\expe{ \left( \int_{Q_F} \left[ \frac{1}{2} \frac{|(\vr \vu)_0|^2}{\vr_0} + \frac{1}{\gamma -1} \vr^\gamma \right] \dx \right)^r } 	 \aleq 1, 
 \br 
\vc{B}_0 &\in L^2(\Td; \R^3),\ \expe{ \left( \intTd{ |\vc{B}_{0} |^2 } \right)^r } \aleq 1, \ \text{for some } r \geq 4,\br
\int_{\Td} \vc{B}_0& \cdot\Grad\psi \ \dx = 0,\ \psi \in C^1(\Td),	
\label{w1}
\end{align}
where the initial data are assumed to be random. Here, $\aleq 1$ denotes $\leq C$ for some constant $C$ depending on the data but not on $n$. Observe that the initial density and momentum of the fluid are considered only in the domain $Q_F$, since no fluid is present outside this domain. In contrast, the initial data of the magnetic field is defined on the whole torus $\Td$.

\begin{definition}[{\bf Weak martingale solution}] \label{wD1}
A weak martingale solution of problem \eqref{eq:MHD} and \eqref{energ_nerovnost} is the quantity 
\[
\mathcal{U} = \Big( (\Omega, \mathcal{A}, (\mathcal{A}_t)_{t \geq 0}, 
\prst ) , \vr, \vu, \vc{B}, W^1, W^2 \Big), 
\] 	
where 
\begin{enumerate}
	\item 
	$(\Omega, \mathcal{A}, (\mathcal{A}_t)_{t \geq 0}, 
	\prst )$ is a complete stochastic basis with a right--continuous filtration $(\mathcal{A}_t)_{t \geq 0}$.
	\item $(W^1, W^2)$ are independent  $(\mathcal{A}_t)_{t \geq 0}$ Wiener processes. 
\item 
\begin{align} 
\vr &\in C_{\rm weak}([0,T]; L^\gamma (Q_F)), \br 
\vr \vu &\in C_{\rm weak}([0,T]; L^{\frac{2 \gamma}{\gamma + 1}}(Q_F; \R^3)), \br 
\vc{B} &\in C_{\rm weak}([0,T]; L^2(\Td; \R^3)) 	
\nonumber
\end{align}
are $(\mathcal{A}_t)$ progressively measurable stochastic processes, 
\[
\vu \in L^2(0,T; W^{1,2}_0(Q_F; \R^3)) 
\]
is an $(\mathcal{A}_t)$ adapted random distribution, where the proper definition of random distribution has been given in \cite{BrFeHobook}, Chapter 2.2.
\item The joint law of the initial data
\begin{align}
[\vr(0, \cdot), (\vr\vu)(0, \cdot), \vc{B}(0,\cdot)]  &\sim [\vr_0, (\vr \vu)_0, \vc{B}_0] \ \mbox{(equivalence in law)} \br &\quad \mbox{in}\ L^1 (Q_F) 
\times L^1 (Q_F; \R^3) 
\times L^1(\Td; \R^3).  
\nonumber  
\nonumber
\end{align}

\item The equation of continuity is satisfied in the weak sense $\prst$-a.s.: 
\begin{equation} \label{w2} 
\left[ \int_{Q_F} \vr \varphi \ dx \right]_{t=0}^{t = \tau} 
= \int_0^\tau \int_{Q_F} \Big[ \vr \partial_t \varphi + \vr \vu \cdot 
\Grad \varphi \Big] \dx \dt 
\end{equation}	
for any $\varphi \in C^1([0,T] \times \Ov{Q}_F)$. 	
\item The renormalized equation of continuity is satisfied in the weak sense $\prst$-a.s.: 
\begin{equation}\label{w2re}
   \left[ \int_{Q_F}{ b(\vr) \varphi \dx} \right]_{t=0}^{t = \tau}= \int_0^\tau\int_{Q_F}[b(\vr) \partial_t \varphi + b(\vr) \vu \cdot \Grad \varphi+(b(\vr)-b'(\vr)\vr)\Div \vu\varphi]\D x\,  \D t, 
\end{equation}
for any $\varphi \in C^1([0,T] \times \Ov{Q}_F)$ and for any $b(z)$ such that $b \in C^1([0,\infty)), b' \in C_c([0,\infty))$. 
\item
The momentum equation is satisfied in the weak sense $\prst$-a.s.: 
\begin{align}
    \left[ \int_{Q_F}{ \vr \vu \cdot \bfphi } \right]_{t = 0}^{t = \tau}= &\int_0^\tau\int_{Q_F}[\vr \vu\cdot \partial_t \bm{\varphi} + (\vr \vu\otimes \vu): \Grad \bm{\varphi}+p(\vr)\Div \bm{\varphi} \br
    &  -\nu \Grad \vu : \Grad \bm{\varphi}-(\mu+\nu) \Div \vu\, \Div \bm{\varphi}-\vr \Grad G \cdot \bm{\varphi}]\D x\, \D t \br
    & +\int_0^\tau\int_{Q_F}[\Curl \vc{B}\times\vc{B}] \cdot \bm{\varphi}\, \D x\, \D t 	
\label{w3}
\end{align}	
for any $\bfphi \in C^1_c([0,T] \times Q_F; \R^3)$.
\item 
The induction equation is satisfied in the weak sense $\prst$-a.s.:
\begin{align} 
\left[ \intTd{ \vc{B} \cdot  \bm{\Psi} } \right]_{t = 0}^{t = \tau} &=\int_0^\tau\intTd{\vc{B} \cdot \partial_t \bm{\Psi} }\dt\br
 &-\int_0^\tau\int_{\Td}[  \eta \, \Curl \vc{B}\cdot \Curl\Psi
 +(\vc{B}\times\vu )\cdot \Curl\Psi]\D x \, \D t \br
    &+\int_0^\tau \left(\int_{\Td}\alpha\vc{B} \cdot \bm{\Psi}\, \D x\right) \D W^1	 \br
 &+\int_0^\tau \left(\int_{\Td}  \Curl\vc{j} \cdot\bm{\Psi} \, \D x\right) \D W^2
\label{w4}
\end{align}	
for any $\bm{\Psi} \in C^1([0,T] \times \Td, \R^3)$, where function $\eta$ is the deterministic piecewise constant function defined in \eqref{m3} and
\begin{equation} \label{w5}
\intTd{ \vc{B}(\tau, \cdot) \cdot \Grad \psi } = 0 
\end{equation} 
$\prst$-a.s. for any $\psi \in C^1(\Td)$.

\item The energy inequality
\begin{align}
   &\left[ \int_{Q_F} E(\vr, \vr \vu)  \dx + 
\intTd{ \frac{1}{2} |\vc{B}|^2  }\right]^{t=\tau}_{t=0} -\int_0^\tau \intTd{\vr\vu\cdot \Grad G}\br &+ 
\int_0^\tau \int_{Q_F} \mathbb{S}(\Grad \vu) : \Grad \vu \ \dx
+ \int_0^\tau \intTd{ \eta |\Curl \vc{B} |^2 } \dt  \br 
&\leq\frac{\alpha^2}{2} \int_0^\tau \intTd{ |\vc{B}|^2(t, \cdot) } \dt 
+ \frac{1}{2}\int_0^\tau \intTd{ |\Curl \vc{j}|^2 } \dt \br &+ \int_0^\tau \left( \intTd{ \alpha |\vc{B}|^2(t, \cdot) } \right) \D W^1 + \int_0^\tau \left( \intTd{  \Curl \vc{j} \cdot 
\vc{B}(t, \cdot)} \right) \D W^2
\label{w6}
\end{align}	
holds $\prst$-a.s.
	
\end{enumerate}	
It is important to note that all test functions are taken to be deterministic.
\end{definition}

Let us point out that, similarly to the case of the classical solution, the relation \eqref{w5} holds if we assume
\begin{equation}\label{w7}
    \int_{\Td} \vc{B}(0,\cdot) \cdot \Grad\psi \, \D x=0,
\end{equation}

by choosing $\bm{\Psi}= \Grad \psi$ in \eqref{w4}. Proving the existence of a weak martingale solution in the sense of Definition \ref{wD1} is the main goal of this paper. For this reason, we shall also define our problem on the whole torus using the penalization method similar to \cite{BaFeLMMiYu} (recall that in \cite{BaFeLMMiYu} we have a different system, but the procedure is the same).

\subsection{Problem on (periodic) torus}
Here, all equations are satisfied in $(0,T) \times \Td$, meaning that all quantities are spatially periodic. As before, neither the equation of continuity nor the momentum equation contains stochastic forcing. The key difference is the presence of a ``friction term'' $\beta \vu$ in the momentum equation \eqref{f2f}. Specifically, the momentum equation takes the form
\begin{equation*}
    \D (\vr \vu) + \Div(\vr \vu \otimes \vu)\dt+\Grad p(\vr) \dt= \Div \mathbb{S}(\mathbb{D}\vu) \dt+\vr \Grad G \dt+ \Curl \vc{B}\times \vc{B} \dt - \beta \vu \dt.
\end{equation*}

For the time being, we suppose 
\begin{equation} \label{S5}
\beta \in L^\infty (\Td),\ \beta \geq 0 
\end{equation}
is a given deterministic function. Later, it will be used as penalization outside the fluid domain $Q_F$, where the density and the fluid velocity are zero. The martingale solutions are defined in a similar way to Definition \ref{wD1}. Therefore, we consider the initial data
\begin{align} 
	\vr_0 &\in L^1 (\Td),\ \vr_0 \geq 0,\  0 < \underline{\vr} \leq \intTd{ 
	\vr_0}  \leq \Ov{\vr},\ \underline{\vr}, \Ov{\vr} \ \ \mbox{deterministic constants} \br 
	(\vr \vu)_0 &\in L^1(\Td; \R^3), \
	\expe{ \left( \intTd{ \left[ \frac{1}{2} \frac{|(\vr \vu)_0|^2}{\vr_0} + \frac{1}{\gamma -1} \vr^\gamma \right] } \right)^r } 	 \aleq 1,\ 
	\br 
	\vc{B}_0 &\in L^1(\Td; \R^3),\ \expe{ \left( \intTd{ |\vc{B}_{0} |^2 } \right)^r } \aleq 1, \ r \geq 4,\br
    \int_{\Td} &\vc{B}_0 \cdot \Grad \psi\, \dx = 0,\ \psi \in C^1(\Td).	\label{P1}
\end{align}

The difference between \eqref{w1} and \eqref{P1} is that now all the functions are defined on the whole torus $\Td$. This problem leads to a very similar definition of a weak martingale solution as in Definition \ref{wD1}.
\begin{definition}[{\bf Weak martingale solution on the torus}] \label{PD1}
    
	A weak martingale solution on $(0,T) \times \Td$ is the quantity 
	\[
	\mathcal{U} = \Big( (\Omega, \mathcal{A}, (\mathcal{A}_t)_{t \geq 0}, 
	\prst ) , \vr, \vu, \vc{B}, W^1, W^2 \Big), 
	\] 	
	where 
	\begin{enumerate}
		\item 
		$(\Omega, \mathcal{A}, (\mathcal{A}_t)_{t \geq 0}, 
		\prst )$ is a complete stochastic basis with a right--continuous filtration $(\mathcal{A}_t)_{t \geq 0}$.
		\item $(W^1, W^2)$ are independent  $(\mathcal{A}_t)_{t \geq 0}$ Wiener processes. 
		\item 
		\begin{align} 
			\vr &\in C_{\rm weak}([0,T]; L^\gamma (\Td)), \br 
			\vr \vu &\in C_{\rm weak}([0,T]; L^{\frac{2 \gamma}{\gamma + 1}}(\Td; \R^3)), \br 
			\vc{B} &\in C_{\rm weak}([0,T]; L^2(\Td; \R^3)) 	
			\nonumber
		\end{align}
		are $(\mathcal{A}_t)$ progressively measurable stochastic processes, 
		\[
		\vu \in L^2(0,T; W^{1,2}(\Td; \R^3)) 
		\]
		is $(\mathcal{A}_t)$ adapted random distribution.
		\item The joint law of the initial data
		\begin{align}
			[\vr(0, \cdot), (\vr \vu)(0, \cdot), \vc{B}(0,\cdot)]  &\sim [\vr_0, (\vr\vu)_0, \vc{B}_0] \ \mbox{(equivalence in law)} \br &\quad \mbox{in}\ L^1 (\Td) 
			\times L^1 (\Td; \R^3) 
			\times L^1(\Td; \R^3).  
			\nonumber
		\end{align}
		
		\item The equations \eqref{w2}, \eqref{w2re}, \eqref{w3} are satisfied in the weak sense $\prst$-a.s. on the whole domain $\Td$ instead of $Q_F$. Namely, we now consider energy inequality of type
        \begin{align}
			&\left[ \int_{\Td} E(\vr, \vr \vu)  \dx + 
\frac{1}{2}\intTd{ |\vc{B}|^2  } \right]^{t=\tau}_{t=0} + \int_0^\tau \intTd{ \beta |\vu|^2 } \dt  \br&+ 
			\int_0^\tau \intTd{ \mathbb{S}(\Grad \vu) : \Grad \vu }
			+ \int_0^\tau \intTd{ \eta |\Curl \vc{B} |^2 } \dt 
			\br 
			&\leq\frac{\alpha^2}{2} \int_0^\tau \intTd{ |\vc{B}|^2(t, \cdot) } \dt 
+ \frac{1}{2}\int_0^\tau \intTd{ |\Curl \vc{j}|^2 } \dt \br &+ \int_0^\tau \left( \intTd{ \alpha |\vc{B}|^2(t, \cdot) } \right) \D W^1 + \int_0^\tau \left( \intTd{  \Curl \vc{j} \cdot 
\vc{B}(t, \cdot)} \right) \D W^2 
			\label{P6}
		\end{align}	
		holds $\prst$-a.s.
        \\
        
        Equations \eqref{w4} and \eqref{w5} hold in exactly the same way as in Definition \ref{wD1} with $\eta \in C^\infty (\Td)$.
	\end{enumerate}		
\end{definition}	
By \cite{WangH21}, under conditions \eqref{P1}, there exists a weak martingale solution in the sense of Definition \ref{PD1} in our setting; i.e., the periodic boundary conditions and stochastic terms fulfill the solenoidal condition.

\subsection{Penalization}

Our aim is to construct solutions that satisfy the fluid equations only in $Q_F$, starting from solutions on $\Td$ from Definition \ref{PD1}. 
To this end, we consider penalization of Brinkman's type 
\begin{equation} \label{M2}
\beta_n = n \bm{1}_{\Td \setminus \Ov{Q}_F} \in L^\infty(\Td). 
\end{equation}
In addition, we consider a sequence of smooth approximations $\eta_n$, 
\begin{align} 
\eta_n &\in C^\infty(\Td),\ 
0<\min \{\eta_F, \eta_{\rm int}, \eta_{\rm ext} \} \leq \eta_n \leq 
\max \{\eta_F, \eta_{\rm int}, \eta_{\rm ext} \}, \br 
\eta_n(x) &\to \eta(x) = \left\{ \begin{array}{l} 
\eta_{\rm int}\ \mbox{for}\ x \in \Ov{Q}_{\rm int}, \\ 
\eta_F \ \mbox{for}\ x \in Q_F, \\ 
\eta_{\rm ext} \ \mbox{for}\ x \in \Ov{Q}_{\rm ext} \end{array} \right.
\ \mbox{for all}\ x \in \Td 	 \label{M3} 
\end{align} 
Note that such a sequence exists, since $\eta \in L^\infty(\Td)$ is a piecewise constant function.
\\

The main result proved in this paper is the existence of the weak martingale solution in the sense of Definition \ref{wD1} as formulated in the following theorem.

\begin{theorem}[\bf Convergence of penalized solutions.] \label{MT1}

Let $\gamma > \frac{3}{2}$.	
Suppose that the torus $\Td$ admits a decomposition into $Q_{\rm int}$, 
$Q_F$, and $Q_{\rm ext}$ specified in \eqref{g1}--\eqref{g3}, where $Q$ and
$Q_F$ are of class $C^{2+\xi}$ for $\xi>0$. Let the (random) initial data
$(\vr_0, \vm_0, \vc{B}_0)$ satisfy \eqref{w1} for some $r \geq 4$.
Let $\beta = \beta_n$, $\eta = \eta_n$ be as in \eqref{M2}, \eqref{M3}.
 
Finally, let the system admit a sequence of weak martingale solutions 
\[
\mathcal{U}_n = 
\Big( (\Omega_n, \mathcal{A}_n, (\mathcal{A}_{n,t})_{t \geq 0}, 
\prst_n ) , \vr_n, \vu_n, \vc{B}_n, W^{1}_n, W^{2}_n \Big), 
\] 	
in $(0,T) \times \Td$
in the sense specified in Definition \ref{PD1} with initial conditions such that
\begin{align*}
\varrho_{0,n} &\xrightarrow{d} \varrho_0 \quad \text{in } L^1(\mathbb{T}^3), \\
\mathbf{m}_{0,n} &\xrightarrow{d} \mathbf{m}_0 \quad \text{in } L^1(\mathbb{T}^3; \mathbb{R}^3), \\
\mathbf{B}_{0,n} &\xrightarrow{d} \mathbf{B}_0 \quad \text{in } L^1(\mathbb{T}^3; \mathbb{R}^3), \\
\expe{\int_{\mathbb{T}^3} \left( E(\vr_{0,n}, \vm_{0,n}) +\frac{1}{2}|\vc{B}_{0,n}|^2 \right) \, \mathrm{d}x}
&\leq
\expe{\int_{Q_F} E(\vr_0, \vm_0)  \dx + 
\frac{1}{2}\intTd{ |\vc{B}_0|^2  }}
\end{align*}
as $n\to \infty$, where $\xrightarrow{d}$ means convergence in distribution and
\begin{equation*}
    \vr_0=0, \ \vm_0 =0 \ \text{for any } x \in \Td\setminus Q_F.
\end{equation*}

Then there exists a subsequence (not relabeled here for the sake of simplicity) of random variables 
\[
(\tilde{\vr}_n, \tilde{\vu}_n, \tilde{\vc{B}}_n, \tilde{W}^{1}_n, \tilde{W}^{2}_n )_{n =1}^\infty 
\] 
defined on a joint probability basis  
\[
(\Omega, \mathcal{A}, (\mathcal{A}_{t})_{t \geq 0}, 
\prst)   
\]	
satisfying: 
\begin{itemize}
	\item
\[
(\tilde{\vr}_n, \tilde{\vu}_n, \tilde{\vc{B}}_n, \tilde{W}^{1}_n, \tilde{W}^{2}_n ) \sim 
(\vr_n, \vu_n, \vc{B}_n, W^{1}_n, W^{2}_n) 
\]	
in 
\[
L^1((0,T) \times \Td; \R^7) \times C([0,T]; \R^2); 
\]
\item 
\begin{align} 
\tilde{\vr}_n &\to \vr \ \mbox{weakly-(*) in}\ L^\infty(0,T; L^\gamma (\Td) ), \br
\tilde{\vu}_n &\to \vu \ \mbox{weakly in}\ L^2(0,T; W^{1,2}(\Td; R^3)),	\br
\tilde{\vc{B}}_n &\to \vc{B} \ \mbox{weakly in} \
L^2(0,T; W^{1,2}(\Td; R^3)), \br 
\tilde{W}^1_n &\to W^1,\ \tilde{W}^2_n \to W^2 \ \mbox{in}\ C[0,T]  
\nonumber
\end{align}	
$\prst$-a.s., where 
\[
\mathcal{U} = \Big( (\Omega, \mathcal{A}, (\mathcal{A}_t)_{t \geq 0}, 
\prst ) , \vr, \vu, \vc{B}, W^1, W^2 \Big)
\]
is a weak martingale solution of the problem in the sense of Definition \ref{wD1}.

\end{itemize}	

\end{theorem}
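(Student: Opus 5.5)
We follow the stochastic compactness scheme: uniform bounds from the energy inequality, tightness, Skorokhod--Jakubowski representation, identification of the limit equations, and finally the strong convergence of the density needed for the pressure.

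\textbf{Step 1: uniform moment estimates.} Starting from \eqref{P6} with $\beta=\beta_n$, $\eta=\eta_n$, we take the supremum over $\tau\in[0,T]$ and raise to the power $r$. The stochastic integrals are handled by the Burkholder--Davis--Gundy inequality:
\[
\expe{\sup_\tau\Big|\int_0^\tau\alpha\|\vc{B}_n\|_{L^2}^2\,\D W^1\Big|^r}\aleq \expe{\Big(\int_0^T\|\vc{B}_n\|_{L^2}^4\dt\Big)^{r/2}},
\]
and the $W^2$-term is treated similarly using \eqref{f5}. The right-hand side is absorbed with Young's inequality, and a Gronwall argument yields
\begin{align*}
&\expe{\sup_t\Big(\intTd{E(\vr_n,\vr_n\vu_n)}+\|\vc{B}_n\|_{L^2}^2\Big)^r}
+\expe{\Big(\int_0^T\|\vu_n\|_{W^{1,2}}^2+\|\vc{B}_n\|_{W^{1,2}}^2\dt\Big)^r}\\
&\qquad+\expe{\Big(n\int_0^T\int_{\Td\setminus Q_F}|\vu_n|^2\dx\dt\Big)^r}\aleq 1 .
\end{align*}
These bounds are uniform in $n$ because $\eta_n\ge\min\eta>0$ and $\Div\vc{B}_n=0$ on the torus, so that $\|\Curl \vc{B}\|_{L^2}$ controls $\|\Grad\vc{B}\|_{L^2}$.

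\textbf{Step 2: improved pressure integrability.} The standard Bogovskii/inverse-divergence test gives $\vr_n^{\gamma+\theta}$ bounds, but the friction term $n\vu_n$ is not uniformly bounded. We therefore localize: on compact subsets $K\subset Q_F$ (and symmetrically in the exterior) we test with $\phi\,\mathcal{B}[\phi\vr_n^\theta-\langle\cdot\rangle]$, $\phi\in C_c^\infty(Q_F)$, where $\beta_n=0$. This gives local equi-integrability of $p(\vr_n)$ in $Q_F$. In expectation the bound holds after truncating with stopping or cut-off arguments, since the fluid equations are pathwise deterministic given $\vc{B}_n$.

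\textbf{Step 3: compactness and identification.} We use tightness of the laws of $(\vr_n,\vr_n\vu_n,\vu_n,\vc{B}_n,W_n^1,W_n^2,\vr_{0,n},\vm_{0,n},\vc{B}_{0,n})$ on path spaces with weak topologies ($C_{\rm weak}([0,T];L^\gamma)$, weak $L^2(W^{1,2})$, $L^2(L^2)$ for $\vc{B}_n$ via Aubin--Lions/time regularity of the induction equation, and $C[0,T]$). The Jakubowski--Skorokhod theorem then gives the new sequence on a common basis with a.s.\ convergence, and equality of laws transfers \eqref{P6} and the weak equations to the tilde variables. From the penalization bound, $\int\!\!\int_{\Td\setminus Q_F}|\tilde\vu_n|^2\to0$, so $\vu=0$ outside $Q_F$ and hence $\vu\in L^2(0,T;W^{1,2}_0(Q_F))$. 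To get $\vr=0$ outside $\Ov Q_F$ we need the initial condition $\vr_0=0$ there together with the exterior transport: testing the continuity equation near $\Td\setminus\Ov Q_F$ with $\vu$ nearly zero, a renormalization/Gronwall argument on $\int_{Q_{\rm ext}\cup Q_{\rm int}}\vr_n$ shows the exterior mass vanishes. For the limit passage, the nonlinear terms $\vr\vu\otimes\vu$ and $\Curl\vc{B}\times\vc{B}$, as well as $\vc{B}\times\vu$, follow from strong convergence of $\vr_n\vu_n$ in $C_{\rm weak}$ (hence $L^2(W^{-1,2})$), of $\vc{B}_n$ in $L^2(L^2)$, and weak convergence of gradients. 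The stochastic integrals in \eqref{w4} are identified by the standard martingale argument: the processes $M_n(\tau)=\int\vc{B}_n\cdot\Psi|_0^\tau-\dots$ and $M_n^2-\int(\alpha\int\vc{B}_n\Psi)^2$, $M_nW^i_n-\dots$ are martingales, and these properties pass to the limit, giving the stochastic integral representation. Alternatively one may use the lemma on convergence of stochastic integrals (Debussche--Glatt-Holtz--Temam), since $\tilde{\vc{B}}_n\to\vc{B}$ in $L^2(L^2)$ and $\tilde W_n\to W$ in $C$. Because $\eta_n\to\eta$ boundedly a.e., $\eta_n\Curl\tilde{\vc B}_n\rightharpoonup\eta\Curl\vc B$. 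The energy inequality \eqref{w6} follows by lower semicontinuity, with the convergence of the initial energies supplied by the hypothesis.

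\textbf{Step 4 (main obstacle): strong convergence of the density in $Q_F$}, needed to identify $\overline{p(\vr)}=p(\vr)$. We follow the Lions--Feireisl scheme pathwise (for a.e.\ $\omega$ on the new basis, since the fluid part carries no noise). We prove the effective viscous flux identity $\overline{p(\vr)T_k(\vr)}-(\lambda+\frac43\nu)\overline{T_k(\vr)\Div\vu}=\overline{p(\vr)}\,\overline{T_k(\vr)}-(\lambda+\frac43\nu)\overline{T_k(\vr)}\Div\vu$, localized in $Q_F$ using test functions $\phi\Grad\Delta^{-1}[\phi T_k(\vr_n)]$ with $\phi\in C_c^\infty(Q_F)$, plus a commutator lemma. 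The Lorentz term converges strongly enough to drop out. Combining this with the bounded oscillation defect (which requires $\gamma>\frac32$) and renormalized continuity gives $\overline{L_k(\vr)}=L_k(\vr)$ and hence a.e.\ convergence. The delicate points are the lack of uniform bounds on $\beta_n\vu_n$ near $\partial Q_F$, which forces everything to be localized, and the measurability and progressive measurability of the limits, which follow from the a.s.\ convergence and the construction of the filtration generated by the limit processes.
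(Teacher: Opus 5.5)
Your proposal is correct and follows essentially the same route as the paper: moment bounds from \eqref{P6} via Burkholder--Davis--Gundy and Gronwall, Jakubowski--Skorokhod representation, and the stochastic-integral convergence lemma for the induction equation and the energy inequality. It also matches the paper in obtaining $\vu=0$ and $\vr=0$ outside $Q_F$ from the penalization bound and the continuity equation, and in using a pathwise deterministic Lions--Feireisl argument localized in $Q_F$, which the paper only cites and you spell out in more detail.

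A few minor points should be tightened:
\begin{itemize}
\item The parenthetical exterior pressure estimate is not uniform in $n$, since the friction term tested against a Bogovskii field there is only $O(\sqrt{n})$. It is also not needed, because $\vr_n\to 0$ strongly outside $Q_F$.
\item No Gronwall argument is needed for the exterior density. Testing the continuity equation with functions supported in $\Td\setminus\Ov{Q}_F$, where $\vr_n\vu_n\to 0$ in $L^1$, gives $\partial_t\vr=0$ there. Nonnegativity then upgrades weak to strong convergence of $\vr_n$.
\item The $C_{\rm weak}$ convergence of $\vr_n\vu_n$ holds only on $Q_F$. This suffices for test functions supported in $Q_F$.
\item The random current $\vc{j}$ and the initial energy should be included among the variables in the Skorokhod representation. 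Otherwise the induction equation and the convergence of initial energies do not transfer pathwise.
\end{itemize}
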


Martingale weak solutions for the periodic problem were constructed by Wang \cite{WangH21}. We recall once more that there are two major gaps in Wang's paper: 

\begin{enumerate}
\item Wang considers the homogeneous boundary conditions for 
$\vc{B}$, namely
\[
\vc{B} = 0 \ \text{ on the boundary}.
\]
The problem, however, is ill--posed with these boundary conditions (to be more precise, the problem is overdetermined; see \cite{Lassner}).

\item Wang considers general nonlinear diffusion coefficients in the multiplicative noise. In this case, however, the correct weak formulation must be based on solenoidal test functions. This also produces a ``pressure'' term in Ohm's law, which is completely ignored in \cite{WangH21}. 
\end{enumerate}
	
Wang's result can therefore be expected to hold under the following conditions: 
(i) The boundary conditions are space periodic, (ii) the diffusion coefficients in the noise are compatible with the solenoidal constraint. 
This is exactly the case we considered.	

As Wang's result was inspired by \cite{BrFeHobook}, we shall suppose that the martingale solutions exist if the fourth moment of the initial data is bounded. Therefore, we assume \eqref{P1} to hold. We obtain the following results as a corollary.

\begin{corollary} \label{MC1}
Under the hypothesis of Theorem \ref{MT1}, the problems 
\eqref{eq:MHD} and \eqref{energ_nerovnost} 
admit a weak martingale solution in the sense of Definition \ref{wD1}.
\end{corollary}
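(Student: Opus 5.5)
The corollary follows by combining the existence result for the penalized torus problem with Theorem \ref{MT1}. The work lies in checking that the hypotheses of the theorem can be met.

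\emph{Step 1: extend the data to the torus.} Given data $(\vr_0,\vm_0,\vc{B}_0)$ satisfying \eqref{w1} on $Q_F$, we extend $\vr_0$ and $\vm_0$ by zero to $\Td\setminus Q_F$. For each $n$ we take $\vr_{0,n}=\vr_0$, $\vm_{0,n}=\vm_0$, $\vc{B}_{0,n}=\vc{B}_0$.

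With this choice the extended data satisfy \eqref{P1}:
\begin{itemize}
\item the mass bounds are unchanged;
\item $E(0,0)=P(0)=0$, so the energy moments are unchanged;
\item the solenoidal condition on $\vc{B}_0$ is assumed on the whole torus.
\end{itemize}
The convergences in distribution hold trivially, since the sequence is constant. The required energy inequality holds with equality, because the integral of $E$ over $\Td$ equals the integral over $Q_F$.

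If one prefers strictly positive density or smoother data (as some constructions on $\Td$ require), one can instead use regularized data. Take $\vr_{0,n}=\vr_0+\delta_n$ with $\vm_{0,n}$ adjusted accordingly, and $\delta_n\to 0$. Then both the convergence in law and the limsup energy inequality follow from dominated convergence. The paper's energy hypothesis is stated with ``$\leq$'', and one would need to verify it in the regularized form. This is the only possibly delicate point, and the unregularized constant choice avoids it.

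\emph{Step 2: solve the penalized problems.} For each $n$ we fix $\beta_n$ from \eqref{M2} and $\eta_n$ from \eqref{M3}. We have $\beta_n\in L^\infty(\Td)$ and $\beta_n\ge0$, which is \eqref{S5}, and $\eta_n$ is smooth with uniform bounds.

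We then invoke the existence result of \cite{WangH21} recalled after Definition \ref{PD1}. It applies in our setting because:
\begin{itemize}
\item the boundary conditions are periodic;
\item the noise $\alpha\vc{B}\,\D W^1+\Curl\vc{j}\,\D W^2$ is compatible with the solenoidal constraint;
\item the moment condition with $r\ge4$ is satisfied.
\end{itemize}
This yields weak martingale solutions $\mathcal{U}_n$ in the sense of Definition \ref{PD1}. These $\mathcal U_n$ may live on different probability spaces, and Theorem \ref{MT1} allows this.

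\emph{Step 3: apply the theorem.} All hypotheses of Theorem \ref{MT1} are now verified: $\gamma>\frac32$, the domain regularity, \eqref{w1} with $r\ge4$, the penalization parameters, the sequence $\mathcal U_n$, and the convergence of the initial data together with the energy bound. Theorem \ref{MT1} then provides a new stochastic basis and a limit $\mathcal U$ that is a weak martingale solution in the sense of Definition \ref{wD1}.

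The initial law condition in Definition \ref{wD1} needs one extra remark. The law of $[\vr(0),(\vr\vu)(0),\vc B(0)]$ coincides with the law of $(\vr_0,\vm_0,\vc B_0)$ restricted to $Q_F$. This holds because the laws of the $n$-th initial data converge to it, and the restriction map from $L^1(\Td)$ to $L^1(Q_F)$ is continuous.

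The main obstacle is not in this deduction but in the legitimacy of Step 2, namely applying \cite{WangH21} in the corrected setting. We take this for granted, as the paper states.
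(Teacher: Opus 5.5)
Your proposal is correct and follows the paper's reasoning: you take the penalized torus solutions supplied, by assumption, by Wang's result in the corrected periodic and solenoidal setting, and invoke Theorem \ref{MT1}, whose conclusion is exactly a weak martingale solution in the sense of Definition \ref{wD1}. Your zero-extended constant sequence of initial data makes explicit what the paper leaves implicit, and it is harmless; strictly speaking, the existence of $\mathcal U_n$ is already part of the hypotheses of Theorem \ref{MT1}, so the corollary is immediate once those hypotheses hold.
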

\begin{remark}
    Since the fluid equations contain no stochastic integral, one may fix $\omega \in \Omega$ (similarly as in limit passage for fluid equations in Section \ref{proof}) and apply deterministic arguments. This allows the results of Theorem \ref{MT1} to be extended to Lipschitz domains $Q$ and $Q_F$, following the approach of \cite{2002DomainDependence}.
\end{remark}

\section{Proof of Theorem \ref{MT1}}
\label{proof}
The rest of the paper focuses on the proof of Theorem \ref{MT1}. The proof proceeds as follows: First, we derive uniform bounds in appropriate spaces using the energy inequality \eqref{P6}. Next, we shall show that the sequence of martingale solutions converges in the probabilistic sense, using the stochastic compactness method. After that, we pass to the limit in the weak formulations of both the induction equation and the fluid equations. Finally, we recover the energy inequality \eqref{w6} from Definition \ref{wD1} by passing to the limit.
\subsection{Uniform bounds}
We extract uniform bounds from the energy inequality \eqref{P6}. Let us take a sequence of weak martingale solutions according to Definition \ref{PD1} satisfying the assumptions of Theorem \ref{MT1}.

Raising the energy inequality to the power $r\geq4$ and making a simple adjustment, we obtain
\begin{align}
		&\left[  \int_{\Td} E(\vrn, \vrn \vun) (\tau, \cdot) \dx + 
		\intTd{ \frac{1}{2} |\vc{B}_n|^2 (\tau, \cdot) }  \right]^r  \br &\quad + 
		\left[ \int_0^\tau \intTd{ \big[ \mathbb{S}(\Grad \vun) : \Grad \vun +{ \eta}_n |\Curl \vc{B}_n |^2+ \beta_n |\vun|^2\big]} \dt \right]^r  \br 
		&\quad \aleq \left[  \int_0^\tau \intTd{ |\vc{B}_n|^2} \dt 
		+ \int_0^\tau \intTd{  |\Curl \vc{j}|^2 } \dt \right]^r +\left[\int_0^\tau \intTd{\vr \vu \cdot \Grad G}\dt \right]^r\br &\quad + \left[ \int_0^t \left( \intTd{ |\vc{B}_n|^2(s, \cdot) } \right) \D W^1_{n} \right]^r  + \left[ \int_0^t \left( \intTd{ \Curl \vc{j} \cdot 
			\vc{B}_n(s, \cdot) } \right) \D W^2_{n} \right]^r \br &\quad + \left[ \int_{\Td} E(\vr_n, \vrn \vu_n) (0, \cdot) \dx + 
		\intTd{ \frac{1}{2} |\vc{B}_n|^2 (0, \cdot) } \right]^r .
		\nonumber
\end{align}
Recall that $\aleq1$ denotes $\leq C$ for some constant $C$ depending on the data but not on $n$. 
After applying the supremum over the time interval $[0,\tau]$ for some $\tau\leq T$ and  then the expectation, we get
\begin{align}
		&\mathbb{E}\left[  \sup_{t \in [0,\tau]}\int_{\Td} E(\vrn, \vrn \vun) (\tau, \cdot) \dx + 
		\sup_{t \in [0,\tau]}\intTd{ \frac{1}{2} |\vc{B}_n|^2 (\tau, \cdot) }  \right]^r  \br &\quad + 
		\mathbb{E}\left[ \int_0^\tau \intTd{ \big[ \mathbb{S}(\Grad \vun) : \Grad \vun +{ \eta}_n |\Curl \vc{B}_n |^2+ \beta_n |\vun|^2\big]} \dt \right]^r  \br 
		&\quad \aleq \mathbb{E}\left[  \int_0^\tau \intTd{ |\vc{B}_n|^2} \dt 
		+ \int_0^\tau \intTd{  |\Curl \vc{j}|^2 } \dt \right]^r \br &\quad+\mathbb{E}\sup_{t \in [0,\tau]}\left[\int_0^t \intTd{\vr \vu \cdot \Grad G} \dt\right]^r +\mathbb{E} \sup_{t \in [0,\tau]}\left[ \int_0^t \left( \intTd{ |\vc{B}_n|^2(s, \cdot) } \right) \D W^1_{n} \right]^r \br &\quad+ \mathbb{E}\sup_{t \in [0,\tau]}\left[ \int_0^t \left( \intTd{ \Curl \vc{j} \cdot 
			\vc{B}_n(s, \cdot) } \right) \D W^2_{n} \right]^r \br &\quad + \mathbb{E}\left[ \int_{\Td} E(\vr_n, \vrn \vu_n) (0, \cdot) \dx + 
		\intTd{ \frac{1}{2} |\vc{B}_n|^2 (0, \cdot) } \right]^r .
		\nonumber
\end{align}

%%% (See that the very last term is bounded by our assumptions from Theorem \ref{MT1}, as we assume that all the initial conditions have the same law as in \eqref{P1})

It remains to bound the right-hand side. Let us start with the gravitational term. With the help of Young's inequality and the Cauchy–Schwarz inequality, we obtain
\begin{equation*}
    \sup_{t \in [0,\tau]}\int_0^t \intTd{\vrn \vun \cdot \Grad G} \dt \aleq \int_0^\tau \intTd{ | \vrn \vun|} \dt
\end{equation*}
\begin{equation*}
    \aleq \| \vrn \|_{L^1(0,T;L^1(\Td))}+\int_0^\tau\| \sqrt{\vrn} |\vun|\|^2_{L^2(\Td)}.
\end{equation*}
The $L^1$ norm of density is bounded, since for all $t\in [0,T]$ $\prst$-a.s.
\begin{equation*}
    \| \vrn(t)\|_{L^1(\Td)}=\| \vr_{0,n}\|_{L^1(\Td)}\leq C(\omega), \ \omega\in \Omega.
\end{equation*}
Thus, applying Fubini's theorem, we obtain
\begin{equation*}
    \mathbb{E}\left[\int_0^\tau \intTd{\vrn \vun \cdot \Grad G} \dt\right]^r\aleq 1 + \int_0^\tau \mathbb{E}\| \sqrt{\vrn} |\vun|\|^{2r}_{L^2(\Td)}.
\end{equation*}
For both stochastic integral terms, let us use the Burkholder–Davis–Gundy inequality, which gives
\begin{align*}
    &\mathbb{E} \sup_{t \in [0,\tau]}\left[ \int_0^t \left( \intTd{ |\vc{B}_n|^2(s, \cdot) } \right) \D W^1_{n} \right]^r \quad\br &+ \mathbb{E}\sup_{t \in [0,\tau]}\left[ \int_0^t \left( \intTd{ \Curl \vc{j} \cdot 
			\vc{B}_n(s, \cdot) } \right) \D W^2_{n} \right]^r\br
    &\quad \aleq \expe{ \left( \int_0^\tau \| \vc{B}_n \|_{L^2(\Td; R^3)}^4 \dt \right)^{\frac{r}{2}} }  + \expe{ \left( \int_0^\tau \| \Curl \vc{j} \|_{L^2(\Td; R^3)}^4 \dt \right)^{\frac{r}{2}} },
\end{align*}
where in the second term, we used Young's inequality and the Cauchy–Schwarz inequality. Finally, by applying Jensen's inequality
\begin{align*}
    &\mathbb{E} \sup_{t \in [0,\tau]}\left[ \int_0^t \left( \intTd{ |\vc{B}_n|^2(s, \cdot) } \right) \D W^1_{n} \right]^r \quad\br &+ \mathbb{E}\sup_{t \in [0,\tau]}\left[ \int_0^t \left( \intTd{ \Curl \vc{j} \cdot 
			\vc{B}_n(s, \cdot) } \right) \D W^2_{n} \right]^r\br
    &\quad \leq C(T,r) \left( \int_0^\tau \mathbb{E}\| \vc{B} \|_{L^2(\Td; R^3)}^{2r} \dt+ \int_0^\tau \mathbb{E}\| \Curl \vc{j} \|_{L^2(\Td; R^3)}^{2r} \dt\right) \br
   & \quad \aleq 1+ \int_0^\tau \mathbb{E}\| \vc{B} \|_{L^2(\Td; R^3)}^{2r}.
\end{align*}
This gives
\begin{align}
		&\mathbb{E}\left[  \sup_{t \in [0,\tau]}\int_{\Td} E(\vrn, \vrn \vun) (\tau, \cdot) \dx + 
		\sup_{t \in [0,\tau]}\intTd{ \frac{1}{2} |\vc{B}_n|^2 (\tau, \cdot) }  \right]^r  \br &\quad + 
		\mathbb{E}\left[ \int_0^\tau \intTd{ \big[ \mathbb{S}(\Grad \vun) : \Grad \vun +{ \eta}_n |\Curl \vc{B}_n |^2+ \beta_n |\vun|^2\big]} \dt \right]^r  \br 
		&\quad \aleq1+ \int_0^\tau\mathbb{E}\| \vc{B}_n \|_{L^2(\Td; \R^3)}^{2r}\dt+ \int_0^\tau \mathbb{E}\| \sqrt{\vrn} |\vun|\|^{2r}_{L^2(\Td)}\dt.
        \label{almostthere}
\end{align}	

It remains to estimate the two terms on the right-hand side. In particular, by Grönwall's lemma, the inequality \eqref{almostthere} implies
\begin{align}
    \expe{ \sup_{t \in [0,\tau]} \| \vc{B}_n \|^{2r}_{L^2(\Td; \R^3)} }& \aleq 
	1,\br
    \mathbb{E}\left[ \sup_{t \in [0,\tau]}\| \sqrt{\vrn} |\vun|\|^{2r}_{L^2(\Td)}\right]&\aleq 1.
    \label{u2}
\end{align}
Moreover, using \eqref{u2} in \eqref{almostthere} gives
\begin{align} 
\expe{ \sup_{t \in [0,T]} \left( \intTd{ E(\vr_n, \vrn \vun)(\tau, \cdot) } \right)^r } &\aleq 1, \br
\expe{ \left( \int_0^T \intTd{ \Big[ \mathbb{S}(\Grad \vun) : \Grad \vun 
	+  |\Curl \vc{B}_n |^2  
	+ \beta_n |\vun|^2 \Big] } \dt \right)^r } &\aleq 1. 
	\label{u3}
\end{align}
uniformly in $n$. From the first part of \eqref{u3}, we obtain
\begin{equation*}
    \mathbb{E}\left[ \sup_{t \in [0,T]}\| \sqrt{\vrn} |\vun|\|^{2r}_{L^2(\Td)} +   \sup_{t \in [0,T]} \| \vrn \|^{\gamma r}_{L^\gamma(\Td)}+ \sup_{\tau \in [0,T]} \| \vc{B}_n \|^{2r}_{L^2(\Td; \R^3)}\right]\aleq 1.
\end{equation*}

From the second part of \eqref{u3}, we obtain
\begin{align*}
    \mathbb{E} \| \Grad \vun\|^{2r}_{L^2((0,\tau)\times \Td)}&\aleq
    1
\end{align*}
as well as for the magnetic term, we use the divergence-free condition on $\vc{B}_n$. Thus, we get
\begin{equation*}
    \mathbb{E}\| \Grad \vc{B}_n\|^{2r}_{L^2((0,\tau)\times \Td)}=\expe{\left( \int_0^T \intTd{ \Big[ |\Curl \vc{B}_n |^2  
	\Big] } \dt \right)^r} \aleq 1.
\end{equation*}
Observe that the norms of velocity $\vun$ and the magnetic field $\vc{B}_n$ are also under control in space $L^2(0,T;W^{1,2}(\Td;\R^3))$ as
\begin{equation*}
    \mathbb{E}\|\vc{B}_n \|_{L^2(0,T;W^{1,2}(\Td;\R^3))}^{2r}\leq \expe{\sup_{t \in [0,T]} \| \vc{B}_n\|^{2r}_{L^2(\Td;\R^3)}\left( \int_0^T\dt\right)^{2r}}\leq C 
\end{equation*}
To control the full norm $W^{1,2}(\Td;\R^3)$ of $\vun$ and not just the gradient, note that we have, by the generalized Poincaré inequality (see \cite{FeireislEduard2017SLiT}, Proposition 2.2.)
\begin{equation*}
    \| \vun \|^2_{W^{1,2}(\Td;\R^3)} \aleq \|  \Grad \vun \|^2_{L^{2}(\Td;\R^{3\times 3})}+ \int_{\Td \setminus Q_F} | \vun |^2 \dx .
\end{equation*}

We summarize the estimates above in the following lemma.
\begin{lemma}[]\label{odhady}
    Let $(\vrn, \vun, \vc{B}_n)$ and $r\geq4$ be from Theorem \ref{MT1}. Then there exist $C\in \R$ independent of $n$, such that
\begin{align*}
\mathbb{E}\left( \sup_{t\in[0, T]} \lnorm\sqrt{\vr_{n}} |\vu_{n}|\rnorm_{L^2(\mathbb{T}^3)}^{2r}   \right)
&\le C, 
\qquad &
\mathbb{E}\left( n^r\lnorm\vu_n\rnorm^{2r}_{L^2(0,T;L^{2}(\mathbb{T}^3\setminus Q_F;\R^3))}   \right) 
&\le C, \\
 \mathbb{E}\left( \sup_{t\in[0, T]}\lnorm \vr_{n}\rnorm_{L^\gamma(\mathbb{T}^3)}^{\gamma r}   \right) 
&\le C, 
\qquad &
\mathbb{E}\left( \lnorm\vu_n\rnorm^{2r}_{L^2(0,T;W^{1,2}(\mathbb{T}^3;\R^3))}   \right) 
&\le C, \\
\mathbb{E}\left(   \sup_{t\in[0, T]}\lnorm\vc{B}_{n}\rnorm^{2r}_{L^2(\mathbb{T}^3;\R^3)} \right)
&\le C, 
\qquad &
\mathbb{E}\left(  \lnorm\vc{B}_n\rnorm^{2r}_{L^2(0,T;W^{1,2}(\mathbb{T}^3;\R^3))}  \right) 
&\le C.
\end{align*}
\end{lemma}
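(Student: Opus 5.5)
The plan is to derive all six bounds from the energy inequality \eqref{P6} satisfied by each $\mathcal{U}_n$, following the computation preceding the lemma but organizing it as a clean stopping-time/Grönwall argument.

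\textbf{Step 1: the basic $r$-th moment estimate.} Set
\[
\mathcal{E}_n(t)=\intTd{E(\vrn,\vrn\vun)(t)}+\tfrac12\intTd{|\vc{B}_n(t)|^2}.
\]
Take the $r$-th power of \eqref{P6}, then the supremum over $[0,\tau]$, then the expectation.
\begin{itemize}
\item The initial term is controlled by the hypothesis on the initial data, via equality in law with \eqref{P1}.
\item The deterministic source terms are controlled by \eqref{f5} and by $\int_0^\tau \mathbb{E}\,\mathcal{E}_n^r\,\dt$, using Hölder in time.
\item The gravitational term is handled by $|\vrn\vun|\le \vrn+\vrn|\vun|^2$. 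Mass conservation, obtained by testing \eqref{w2} with $\varphi=1$, gives $\|\vrn(t)\|_{L^1}\le\Ov{\vr}$ deterministically.
\item The stochastic integrals are handled by Burkholder--Davis--Gundy. For the first, I use the bound $\mathbb{E}(\int_0^\tau \|\vc{B}_n\|_{L^2}^4)^{r/2}\le C\int_0^\tau \mathbb{E}\sup_{s\le t}\mathcal{E}_n^r(s)\,\dt$.
\item For the second, I write $\mathbb{E}(\int_0^\tau\|\Curl\vc{j}\|^2\|\vc{B}_n\|^2)^{r/2}\le \delta\,\mathbb{E}\sup\|\vc{B}_n\|^{2r}+C_\delta\mathbb{E}\|\Curl\vc{j}\|_{L^\infty L^2}^{2r}$ and absorb the $\delta$-term into the left-hand side.
\end{itemize}
This yields
\[
\mathbb{E}\sup_{[0,\tau]}\mathcal{E}_n^r+\mathbb{E}\Big(\int_0^\tau\!\!\intTd{[\mathbb{S}:\Grad\vun+\eta_n|\Curl\vc{B}_n|^2+\beta_n|\vun|^2]}\Big)^r\aleq 1+\int_0^\tau \mathbb{E}\sup_{[0,t]}\mathcal{E}_n^r\,\dt .
\]

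\textbf{Step 2: Grönwall and the first column of bounds.} To make Grönwall legitimate, the left-hand side must be finite a priori. I would localize with stopping times $\tau_M=\inf\{t:\mathcal{E}_n(t)>M\}$, apply Grönwall to $t\mapsto\mathbb{E}\sup_{[0,t\wedge\tau_M]}\mathcal{E}_n^r$, and let $M\to\infty$ by monotone convergence. The result is $\mathbb{E}\sup_{[0,T]}\mathcal{E}_n^r\le C$ uniformly in $n$. Since $E\ge\frac12\vrn|\vun|^2+\frac1{\gamma-1}\vrn^\gamma$, this gives the bounds on $\sup_t\|\sqrt{\vrn}\vun\|_{L^2}^{2r}$, $\sup_t\|\vrn\|_{L^\gamma}^{\gamma r}$ and $\sup_t\|\vc{B}_n\|_{L^2}^{2r}$.

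\textbf{Step 3: the dissipative bounds.} Feeding Step 2 back into the inequality of Step 1 bounds the $r$-th moment of the dissipation.
\begin{itemize}
\item Since $\beta_n=n\bm{1}_{\Td\setminus\Ov{Q}_F}$, the dissipation term $\int\beta_n|\vun|^2$ is exactly $n\|\vun\|^2_{L^2(L^2(\Td\setminus Q_F))}$. This gives the bound on $n^r\|\vun\|^{2r}$ (the boundary $\partial Q_F$ has measure zero).
\item Korn's inequality on $\Td$ gives $\|\Grad\vun\|_{L^2}^2\aleq\intTd{\mathbb{S}(\Grad\vun):\Grad\vun}$ because $\nu>0$. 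The generalized Poincaré inequality, with the set $\Td\setminus Q_F$ of positive measure, then controls $\|\vun\|_{L^2W^{1,2}}^2$ by $\|\Grad\vun\|^2_{L^2}+\int_0^T\!\int_{\Td\setminus Q_F}|\vun|^2$. The last term is bounded by $n^{-1}$ times the penalization term, which is at most that term for $n\ge1$.
\item For $\vc{B}_n$, the relation $\eta_n\ge\min\eta>0$ together with \eqref{w5} gives $\|\Grad\vc{B}_n\|_{L^2}=\|\Curl\vc{B}_n\|_{L^2}$ on the torus for solenoidal fields. Adding $T\sup_t\|\vc{B}_n\|_{L^2}^2$ controls the full $L^2W^{1,2}$ norm.
\end{itemize}

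\textbf{Main obstacle.} The main obstacle is Step 1's treatment of the stochastic integral $\int\alpha\|\vc{B}_n\|^2\,\D W^1$, which is quadratic in the energy. BDG only produces $(\int\|\vc{B}_n\|^4)^{r/2}$. This must be converted either into a Grönwall-type term or into a $\delta$-absorbable term $\delta\,\mathbb{E}\sup\|\vc{B}_n\|^{2r}+C\int_0^\tau\mathbb{E}\sup_{[0,t]}\|\vc{B}_n\|^{2r}$, via $(\int\|\vc{B}_n\|^4)^{1/2}\le\sup\|\vc{B}_n\|\,(\int\|\vc{B}_n\|^2)^{1/2}$. Combined with the stopping-time localization needed for finiteness, this is where care is needed.
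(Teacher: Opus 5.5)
Your proposal is correct and follows essentially the same route as the paper. Both take the $r$-th power of the energy inequality, then the supremum and expectation, and use BDG plus Hölder/Jensen in time for the $\D W^1$ integral, which already disposes of your ``main obstacle'' exactly as the paper does. Both then apply Grönwall, and obtain the dissipative bounds from Korn, the generalized Poincaré inequality combined with the penalization term, and $\|\Grad\vc{B}_n\|_{L^2}=\|\Curl\vc{B}_n\|_{L^2}$.

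The only differences are refinements. Your stopping-time localization supplies the a priori finiteness that the paper's Grönwall step tacitly assumes. You also absorb the $\Curl\vc{j}$ integral by a $\delta$-argument, whereas the paper handles it with Young's inequality and Jensen.
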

\subsection{Stochastic compactness}
\label{04-stoch-comp}
To apply the stochastic compactness method, we may choose either the more restrictive Skorokhod representation theorem (see \cite{billing}) or its generalization by Jakubowski (see \cite{jakubowski}). In both cases, we may pass to a new probability basis
$(\Omega, \mathcal{A}, (\mathcal{A}_t)_{t \geq 0}, \prst)$ and to a new sequence of
random variables having the same law as the original and converging a.s. 
(even surely in the Jakubowski version) in the appropriate spaces. Let us present the precise formulation of the Jakubowski representation theorem that will be used (see Theorem 2.7.1; \cite{BrFeHobook}).
\begin{theorem}[Jakubowski--Skorokhod representation theorem]\label{jakubowski}
Let $(X, \tau)$ be a sub-Polish space, and let $\mathcal{S}$ be the $\sigma$-field generated by $\{f_n; n \in \mathbb{N} \}$. If $(\mu_n)_{n \in \mathbb{N}}$ is a tight sequence of probability measures on $(X, \mathcal{S})$, then there exists a subsequence $(n_k)$ and $X$-valued Borel measurable random variables $(U_k)_{k \in \mathbb{N}}$ and $U$ defined on the standard probability space $([0,1], \mathcal{B}([0,1]), \mathfrak{L})$, such that $\mu_{n_k}$ is the law of $U_k$ and $U_k(\omega)$ converges to $U(\omega)$ in $X$ for every $\omega \in [0,1]$. Moreover, the law of $U$ is a Radon measure.
\end{theorem}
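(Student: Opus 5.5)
The result is quoted from \cite{BrFeHobook} (Theorem 2.7.1), going back to \cite{jakubowski}, so in the paper we simply invoke it. If I were to prove it, I would reduce it to the classical Skorokhod theorem on compact metrizable pieces of $X$.

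\emph{Step 1: a metrizable model of $X$.} Since $X$ is sub-Polish, there is a countable family $\{f_n\}$ of continuous functions $X\to[-1,1]$ (rescale if necessary) that separates points. I set
\[
F=(f_n)_{n\in\mathbb{N}}\colon X\to[-1,1]^{\mathbb{N}},
\]
which is continuous and injective, with $[-1,1]^{\mathbb{N}}$ compact metrizable. On every compact $K\subset X$, $F|_K$ is a homeomorphism onto $F(K)$. Hence $\tau$ restricted to $K$ is metrizable, and convergence in $K$ is equivalent to convergence of the $F$-images. Moreover $\mathcal{S}$ is exactly the pull-back of the Borel $\sigma$-field of the cube, and Borel sets of each compact $K$ lie in $\mathcal{S}$.

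\emph{Step 2: layered compacts and weak limits.} By tightness, I choose compacts $K_1\subset K_2\subset\dots$ with
\[
\mu_n(K_m)>1-2^{-m}\quad\text{for all } n .
\]
I put $D_1=K_1$ and $D_m=K_m\setminus K_{m-1}$, all of which belong to $\mathcal{S}$. Each $K_m$ is compact metrizable, so Prokhorov's theorem applies to the finite measures $\mu_n|_{K_m}$. A diagonal argument then gives a subsequence $(n_k)$ along which, for every $m$, $\mu_{n_k}(K_m)\to a_m$ and $\mu_{n_k}|_{K_m}$ converges weakly on $K_m$.

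\emph{Step 3: the construction on $[0,1]$ (main obstacle).} I would split $[0,1]$ into consecutive intervals whose lengths are the masses $\mu_{n_k}(D_m)$, and on each interval apply the classical Skorokhod representation on the compact metric space $K_m$ to the normalized restrictions. The difficulty is that these masses depend on $k$ and that a weak limit of restrictions to $D_m$ may charge $K_{m-1}$. I would handle this as Jakubowski does. One builds the random variables hierarchically, using the convergence of $\mu_{n_k}|_{K_m}$ for each fixed $m$ and interval endpoints that converge as $k\to\infty$. Then $\mathfrak{L}$-almost every $\omega$ eventually sits in a fixed layer $K_m$, where the classical Skorokhod theorem yields $U_k(\omega)\to U(\omega)$. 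By construction each $U_k$ has law $\mu_{n_k}$. Measurability of $U_k$ and $U$ comes from Borel measurability on each compact piece together with the countable gluing.

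\emph{Step 4: convergence everywhere and the Radon property.} On the exceptional $\mathfrak{L}$-null set I redefine all $U_k$ and $U$ to be a fixed constant $x_0$. This does not change any law, and it gives convergence for every $\omega\in[0,1]$. Finally, the law of $U$ is concentrated on the $\sigma$-compact set $\bigcup_m K_m$, each $K_m$ being compact metrizable, so it is inner regular by compacts, hence Radon.
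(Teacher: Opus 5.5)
Your first sentence is exactly what the paper does: the theorem is not proved there but quoted from \cite{BrFeHobook} (Theorem 2.7.1), going back to \cite{jakubowski}, and invoking it is all the paper needs. Steps 1, 2 and 4 of your self-contained sketch are sound, but Step 3 does not go through as written.

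\textbf{The gap in Step 3.} Your layer intervals $I_{k,m}=[\mu_{n_k}(K_{m-1}),\mu_{n_k}(K_m))$ move with $k$. On each of them you place an affinely rescaled classical Skorokhod representation $Y_{k,m}$ of the normalized restriction of $\mu_{n_k}$ to $D_m$. A fixed $\omega$ in the interior of the limit interval $[a_{m-1},a_m)$ is then fed into $Y_{k,m}$ at a parameter $u_k(\omega)$ that changes with $k$. The classical theorem only gives $Y_{k,m}(u)\to Y_m(u)$ for almost every \emph{fixed} $u$. So knowing that $\omega$ eventually sits in a fixed layer does not by itself give $U_k(\omega)\to U(\omega)$.

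\textbf{A repair.} The cleanest fix is to separate the two roles of $\omega$ and work on $[0,1)^2$.
\begin{enumerate}
\item The first coordinate $u$ selects the layer: $u\in[\mu_{n_k}(K_{m-1}),\mu_{n_k}(K_m))$ for $U_k$, and $u\in[a_{m-1},a_m)$ for $U$.
\item The second coordinate $v$ is fed into the classical Skorokhod representation, on the compact metric space $K_m$, of the normalized restrictions $\mu_{n_k}(\cdot\cap D_m)/\mu_{n_k}(D_m)$. Whenever $a_m>a_{m-1}$, these converge weakly on $K_m$ to $(\nu_m-\nu_{m-1})/(a_m-a_{m-1})$, where $\nu_m$ is the weak limit of $\mu_{n_k}|_{K_m}$.
\item Since $a_m\geq 1-2^{-m}$, the limit intervals exhaust $[0,1)$ up to the countable set $\{a_m\}$. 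This is exactly where uniform tightness enters, and your sketch leaves it implicit.
\item Transfer to $([0,1],\mathcal{B}([0,1]),\mathfrak{L})$ by a measure-preserving Borel isomorphism with $[0,1)^2$, and redefine on a Borel null set as in your Step 4.
\end{enumerate}
Alternatively, your one-dimensional rescaled construction can be rescued. The affine reparametrizations converge and have Jacobians bounded away from $0$, so you get $U_k\to U$ in measure on each layer. A further diagonal subsequence then gives almost-everywhere convergence.

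Finally, your worry that the limit of the restrictions to $D_m$ may charge $K_{m-1}$ is harmless. Both $U_k(\omega)$ and $U(\omega)$ lie in the compact metrizable set $K_m$, and convergence there is convergence in $X$.
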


We apply Theorem \ref{jakubowski} to the joint law of the following random variables:
\begin{align}
\mathcal{V}_n = \Big( &\vrn(0, \cdot), (\vrn \vun)(0, \cdot)=\vm_n(0, \cdot), 
 E(\vrn, \vm_n )(0, \cdot), \vc{B}_n(0, \cdot), \vc{j}_n \br 
&\left.
\vr_n, \vu_n, \vrn \vun=\vm_n, p(\vr_n), \vc{B}_n, W^{1}_n, W_{n}^2, 
\left\| \sqrt{\beta_n} \vun \right\|_{L^2((0,T) \times \Td; \R^3)} \right),
\nonumber
\end{align}
considered in the space
\[
\mathcal{X} = \mathcal{X}_{\rm data} \times \mathcal{X}_{\vr} \times \mathcal{X}_\vu \times 
\mathcal{X}_{\vm} \times \mathcal{X}_p \times \mathcal{X}_\vc{B} \times [\mathcal{X}_W]^2 \times \mathcal{X}_\beta,  
\]
where
\[
\mathcal{X}_{\rm data} = L^1(\Td; \R^5) \times L^2(\Td, \R^3) \times  L^\infty(0,T; W^{1,2}(Q_{\text{int}};\R^3)) , 
\]
\[
\mathcal{X}_\vr = 
\left[ L^\infty(0,T; L^\gamma(\Td)) - \mbox{weak-(*)} \right], 
\]
\[
\mathcal{X}_\vu = \Big[ L^2(0,T; W^{1,2}(\Td; \R^3)) - \mbox{weak} \Big],
\]
\[
\mathcal{X}_{\vm} =   
\left[ L^\infty(0,T; L^{\frac{2 \gamma}{\gamma + 1}}(\Td;  \R^3)) - \mbox{weak-(*)} \right],
\]
\[
\mathcal{X}_p = \left[ L^\infty(0,T; \mathcal{M}(\Td)) - \mbox{weak-(*)} \right],
\]
\[
\mathcal{X}_\vc{B} = 
\Big[ L^\infty (0,T; L^2(\Td; \R^3)) - \mbox{weak-(*)} \Big] \cap \Big[ L^2(0,T; W^{1,2}(\Td; \R^3)) - \mbox{weak} \Big]
\]
\[
 \cap \,
C([0,T]; W^{-1,2}(\Td; \R^3)) ;
\]
\[
\mathcal{X}_W = C[0,T];\ 
\mathcal{X}_\beta = \R. 
\]

For notational convenience, in the compactness argument we denote by \(j_n\) a random process that has the same law as \(j\). Thus, \(j_n\) is not an approximation of \(j\); rather,
\(j_n\) has the same law as \(j\) for every \(n\), and satisfies the uniform bound \eqref{f5}.

It follows from the uniform bounds obtained from Lemma \ref{odhady} that the
joint distribution of the random variables in $\mathcal{V}_n$ is tight in $\mathcal{X}$.
Applying Theorem \ref{jakubowski}, we obtain a subsequence of random variables
\begin{align}
	\widetilde{\mathcal{V}}_n = \Big( &\trn(0, \cdot), \widetilde{\vm}_n(0, \cdot), 
	 E(\trn, \widetilde{\vm}_n )(0, \cdot), \tBn(0, \cdot), \widetilde{\vc{j}}_n,  \br 
	&\left.
	\trn, \tun, \widetilde{\vm}_n, p(\trn), \tBn,  \widetilde{W}^1_n, \widetilde{W}^2_n, 
	\left\| \sqrt{\beta_n} \tun \right\|_{L^2((0,T) \times \Td; \R^3)} \right)
	\nonumber
\end{align}
defined on the same probability basis 
\[
(\Omega, \mathcal{A}, \prst) \ \mbox{with a right--continuous 
	filtration} \ (\mathcal{A}_{n,t})_{t \geq 0}
\]
and 
\[
(\widetilde{W}^1_n, \widetilde{W}^2_n) -  (\mathcal{A}_{n,t}) \ \mbox{-Wiener processes},
\]
having the same law as $\mathcal{V}_n$ in $\mathcal{X}$. Moreover, it holds $\prst$-a.s. that
\begin{align}
\trn(0, \cdot) &\to \vr_0 \ \mbox{in}\ L^1(\Td), \br
\widetilde{\vm}_n (0, \cdot) &\to \vm_0 \ \mbox{in}\ L^1(\Td; R^3), \br
E(\trn, \widetilde{\vm}_n)(0, \cdot) &\to E = 
E(\vr_0, \vm_0) \ \mbox{in}\ L^1(\Td),\br
\widetilde{\vc{j}}_n &\to \vc{j} \ \mbox{weakly-(*) in}\ L^\infty(0,T; W^{1,2}_0 (Q_{\rm int}; \R^3)),\br
\tBn (0, \cdot) & \to \vc{B}_0 \ \mbox{in}\ L^2(\Td; R^3), \br
\trn &\to \vr \ \mbox{weakly-(*) in}\ L^\infty(0,T; L^\gamma(\Td)), \br
\tun &\to \vu \ \mbox{weakly in}\ L^2(0,T; W^{1,2}(\Td; R^3)), \br
\widetilde{\vm}_n &\to \vm \ \mbox{weakly-(*) in}\ L^\infty(0,T; L^{\frac{2\gamma}{\gamma +1}}(\Td;R^3)), \br
p(\trn) &\to \Ov{p(\vr)} \ \mbox{weakly-(*) in}\ L^\infty(0,T; \mathcal{M}(\Td)), \br
\tBn &\to \vc{B} \ \mbox{weakly-(*) in}\ L^\infty(0,T; L^2(\Td; \R^3)), \br &\quad \quad \ \ \mbox{weakly in}\ L^2(0,T; W^{1,2}(\Td; \R^3),\br
& \quad \quad \ \ \mbox{and in}\ C([0,T]; W^{-1,2}(\Td; \R^3)), \br
\tWni &\to W^1,\ \tWnii \to W^2 \ \mbox{in}\ C[0,T],\br 
\left\| \sqrt{\beta_n} \tun \right\|_{L^2((0,T) \times \Td; \R^3)} &\to M < \infty.
\label{SC1}
\end{align}
It is easy to show that $W^1$ and $W^2$ are also Wiener processes, and furthermore, that they are $(\mathcal{A}_{t}) \ \mbox{-Wiener processes}$ by the same method as in \cite{BrFeHobook} (Proposition 4.3.10). The last convergence in \eqref{SC1} means, in particular, that the sequence
\[
\left( \left\| \sqrt{\beta_n} \tun \right\|_{L^2((0,T)\times\Td; \R^3)} \right)_{n=1}^\infty \ \mbox{is bounded}. 
\]
Moreover, \eqref{SC1} yields the following pointwise bounds, as stated in Lemma \ref{odhady bez expe}.
\begin{lemma}[]\label{odhady bez expe}
\begin{align*}
\| \sqrt{\trn} |\tun|\|_{L^{\infty}(0,T;L^2(\mathbb{T}^3))}
&\le C 
\qquad &
\sqrt{n} \| \tun \|_{L^2(0,T;L^2(\mathbb{T}^3\setminus Q_F;\R^3)} 
&\le C, \\
\| \trn \|_{L^{\infty}(0,T;L^\gamma(\mathbb{T}^3))} 
&\le C 
\qquad &
\| \tun \|_{L^2(0,T;W^{1,2}(\mathbb{T}^3;\R^3))} 
&\le C, \\
\| \tBn\|_{L^{\infty}(0,T;L^2(\mathbb{T}^3;\R^3))}
&\le C 
\qquad &
\| \tBn \|_{L^{2}(0,T;W^{1,2}(\mathbb{T}^3;\R^3))} 
&\le C.
\end{align*}

where the constant $C\geq0$ may differ based on the choice of random event $\omega \in \Omega$.    
\end{lemma}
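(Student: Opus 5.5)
The plan is to obtain each pathwise bound in Lemma \ref{odhady bez expe} from the $\prst$-a.s. convergence \eqref{SC1}, together with the uniform boundedness principle and weak lower semicontinuity of norms. No new estimate on the original solutions is needed. The expectation bounds of Lemma \ref{odhady} enter only through tightness, which has already been used to build the Jakubowski representation. The key observation is that, by Theorem \ref{jakubowski}, for every fixed $\omega$ each component of $\widetilde{\mathcal{V}}_n(\omega)$ converges in the corresponding factor of $\mathcal{X}$. The bounds therefore follow by fixing $\omega$ and arguing deterministically, with constants allowed to depend on $\omega$.

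I would proceed component by component, in this order.

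\emph{Bounds on $\trn$, $\tun$, $\tBn$.} Fix $\omega$. Then:
\begin{itemize}
\item $\trn(\omega)$ converges weakly-(*) in $L^\infty(0,T;L^\gamma(\Td))$;
\item $\tun(\omega)$ converges weakly in $L^2(0,T;W^{1,2}(\Td;\R^3))$;
\item $\tBn(\omega)$ converges weakly-(*) in $L^\infty(0,T;L^2(\Td;\R^3))$ and weakly in $L^2(0,T;W^{1,2}(\Td;\R^3))$.
\end{itemize}
A weakly or weakly-(*) convergent sequence in the dual of a Banach space is norm-bounded by the Banach--Steinhaus theorem. Both $L^\infty(0,T;L^\gamma)$ and $L^\infty(0,T;L^2)$ are duals of separable spaces, namely $L^1(0,T;L^{\gamma'})$ and $L^1(0,T;L^2)$. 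This gives the four bounds on $\trn$, $\tun$, $\tBn$ with $C=C(\omega)$.

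\emph{Penalization bound.} Since $\beta_n=n\bm{1}_{\Td\setminus\Ov Q_F}$, we have
\[
\sqrt n\,\|\tun\|_{L^2(0,T;L^2(\Td\setminus Q_F))}=\|\sqrt{\beta_n}\tun\|_{L^2((0,T)\times\Td)}
\]
up to the null set $\partial Q_F$. The right-hand side converges to $M(\omega)<\infty$ in $\mathcal{X}_\beta=\R$, and a convergent real sequence is bounded.

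\emph{Kinetic energy term.} This is the step I expect to be the main obstacle. The quantity $\sqrt{\trn}|\tun|$ is not itself a component of $\widetilde{\mathcal{V}}_n$, so the bound must be transferred through equality of laws. The pathwise energy inequality \eqref{P6} holds for the original solutions. Equality in law of $\widetilde{\mathcal{V}}_n$ and $\mathcal{V}_n$ gives, for each $n$, the same expectation bound of Lemma \ref{odhady} for $\sup_t\|\sqrt{\trn}|\tun|\|_{L^2}$. This requires the supremum to be a measurable function on $\mathcal{X}$; the quantity equals $\|\widetilde{\vm}_n/\sqrt{\trn}\|$, and measurability follows from lower semicontinuity of $E$.

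An $n$-independent expectation bound does not by itself give a pathwise bound along the whole sequence, so I would instead use the identity $|\vm|^2/\vr = 2E(\vr,\vm)-2P(\vr)$. I would try either of two routes. The first is to add $\int_{\Td}E(\trn,\widetilde{\vm}_n)\dx$, or the full energy, to the list of variables in $\mathcal{V}_n$ as a real-valued component in $L^\infty(0,T)$ with its weak-(*) topology; its a.s. convergence then yields boundedness exactly as for $M$. The second is to bound it pathwise by the energy inequality evaluated along $\widetilde{\mathcal{V}}_n$, where the right-hand side consists of initial energy converging in $L^1$, $\|\tBn\|^2$ already bounded, and the stochastic integrals converging a.s. after passing to a further subsequence.

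I would adopt the first option, since it fits the tightness argument already given: the expectation bound from Lemma \ref{odhady} makes this extra real-valued component tight. After a possibly further subsequence, this yields $\sup_n\|\sqrt{\trn}|\tun|\|_{L^\infty(0,T;L^2)}\le C(\omega)$.
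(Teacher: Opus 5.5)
Your proof is correct. For five of the six bounds it is the paper's argument. The paper only says that the a.s. convergences in \eqref{SC1} yield the pathwise bounds: weakly or weakly-(*) convergent sequences are norm bounded, and the real component $\|\sqrt{\beta_n}\tun\|$ converges to $M<\infty$.

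You depart from the paper on the bound for $\sqrt{\trn}|\tun|$ in $L^\infty(0,T;L^2)$. The paper includes it with the rest. You instead observe that no component of \eqref{SC1} controls it, and you enlarge the Jakubowski tuple by an energy component. Your caution is justified. From \eqref{SC1} alone, using $\widetilde{\vm}_n=\trn\tun$ and $\gamma\ge \frac32$, the best available estimate is
\[
\int_{\Td}\trn|\tun|^2\dx=\int_{\Td}\widetilde{\vm}_n\cdot\tun\dx\le\|\widetilde{\vm}_n\|_{L^{\frac{2\gamma}{\gamma+1}}(\Td)}\|\tun\|_{L^6(\Td)}.
\]
This bounds $\sqrt{\trn}|\tun|$ only in $L^4(0,T;L^2(\Td))$. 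And, as you note, an $n$-uniform moment bound does not give a pathwise bound on the supremum over $n$.

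Your version therefore gives an actual proof of the first estimate. The price is twofold:
\begin{itemize}
\item you must re-run the tightness and representation step with the extra component $e_n$, which is tight by Markov's inequality and Lemma~\ref{odhady};
\item you must transfer $e_n=\int_{\Td}E(\trn,\widetilde{\vm}_n)\dx$ and $\widetilde{\vm}_n=\trn\tun$ through equality of laws, which uses the Borel measurability you get from lower semicontinuity of $E$.
\end{itemize}
A leaner variant is to add the single real number $\operatorname{ess\,sup}_t\int_{\Td}E(\vr_n,\vm_n)\dx$ as an $\R$-valued component, exactly like $\mathcal{X}_\beta$.

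You were also right to discard your second route. Lemma~\ref{limit-stoch integ} gives convergence of the stochastic integrals only in $L^2(0,T)$ in probability. That would bound the energy in $L^2$ in time, not in $L^\infty$ in time.
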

Thus, the only thing that is left to prove before passing the limit is the proof of tightness.
\\

To begin with, the data $\left(\vrn(0, \cdot), \vm_n(0, \cdot), 
\vc{B}_n(0, \cdot), E(\vrn, \vm_n )(0, \cdot), \vc{j}_n\right)$ are tight in $\mathcal{X}_{\text{data}}$ by Prokhorov's theorem (\cite{billing}, Theorem 5.1.). The tightness of the Wiener process $W$ in $\mathcal{X}_W=C[0,T]$ is a well-known result.

We next address the tightness of the momentum $\vrn \vun= \vm_n$ in the space
\begin{equation*}
    \mathcal{X}_{\vm}=\Big[ L^\infty(0,T; L^{\frac{2\gamma}{\gamma+1}}(\Td; \R^3)) - \mbox{weak-(*)} \Big],
\end{equation*}
with the norm defined as
\begin{equation*}
    \|f \|_{\mathcal{X}_{\vm}}= \sup_{t\in[0,T]}\|f \|_{L^{\frac{2\gamma}{\gamma+1}}(\Td; \R^3))}.
\end{equation*}
For our compact set $K_{\vm,\delta}\subset\mathcal{X}_{\vm}$, we shall choose the ball centered at the origin with radius $L_\delta$, i.e.
\begin{equation*}
    K_{\vm,\delta}:=\big\{f\in\mathcal{X}_{\vm}:  \|f \|_{\mathcal{X}_{\vm}}\leq L_\delta \big\},
\end{equation*}
where the constant $L_\delta$ shall be chosen later. The compactness of this domain holds by the Banach–Alaoglu theorem. Using Markov's inequality, we get
\begin{equation*}
    \mathbb{P}(\vm_n \notin K_{\vm,\delta})=\mathbb{P}(\|\vm_n \|_{\mathcal{X}_{\vm}}\geq L_\delta)
    \leq \frac{\mathbb{E}\|\vm_n \|_{\mathcal{X}_{\vm}}}{L_\delta}.
\end{equation*}
Thanks to Lemma \ref{odhady}, the term on the right-hand side is under control. Indeed, using Hölder's inequality two times, we even see that

\begin{align*}
    &\expe{\sup_{t\in[0, T]} \|\vm_n \|_{L^{q}(\Td;\R^3)}^{\frac{2\gamma}{\gamma+1}r}}\leq \expe{\sup_{t\in[0, T]}\| \vrn\|^{\frac{\gamma r}{\gamma+1}}_{L^\gamma(\Td)}\cdot\sup_{t\in[0, T]}\|\sqrt{\vrn}|\vun| \|^{\frac{2\gamma}{\gamma+1}r}_{L^2(\Td)}}\br
    &\quad\leq \left(\expe{\sup_{t\in[0, T]}\| \vrn\|^{\gamma r}_{L^\gamma(\Td)}} \right)^{\frac{1}{\gamma+1}}\left( \expe{\sup_{t\in[0, T]}\|\sqrt{\vrn}|\vun| \|^{2r}_{L^2(\Td)}}\right)^{\frac{\gamma}{\gamma+1}}\br
    &\quad \leq C(\text{DATA}),
\end{align*}
for
\begin{equation*}
    q=\frac{2\gamma}{\gamma+1}.
\end{equation*}
This is given by Jensen inequality (here we point out that $qr >1$ for all $\gamma> \frac{3}{2}$)
\begin{equation*}
    \mathbb{P}(\vm_n\notin K_{\vm,\delta})\leq \frac{C}{L_\delta}.
\end{equation*}
In the same way as before, choosing $L_\delta=\frac{9C}{\delta}$ yields the result
\begin{equation}
    \mathbb{P}(\vrn \vun \notin K_{\vm,\delta})=\mathbb{P}(\vm_n \notin K_{\vm,\delta})\leq\frac{\delta}{9}.
\end{equation}
We can proceed similarly by choosing the ball as a compact set but more straightforwardly with the following spaces: $\mathcal{X}_\beta:=\R$, $\mathcal{X}_{\vr}:= \left[ L^\infty(0,T; L^{\gamma}(\Td)) - \mbox{weak-(*)} \right]$, $\mathcal{X}_\vu:=\left[ L^2(0,T; W^{1,2}(\Td; \R^3)) - \mbox{weak} \right]$, and lastly $\mathcal{X}_p := \left[ L^\infty(0,T; \mathcal{M}(\Td)) - \mbox{weak-(*)} \right]$, where $\mathcal{M}(\Td)$ denotes the space of Radon measures on $\Td$.
\\

Let us point out two things. The space $\mathcal{X}_\vu$ is not Polish, as it is not completely metrizable; thus, the Jakubowski theorem is needed here rather than the classical Skorokhod theorem.

Also note that, in the case of $\mathcal{X}_p$, we use the fact
\begin{equation*}
    L^\infty(0,T; L^1(\Td)) \hookrightarrow L^\infty(0,T; \mathcal{M}(\Td))\subset (L^1(0,T;C(\Td))^*,
\end{equation*}
Thus, the closed ball in $\mathcal{X}_p$ is weakly-(*) compact by the Banach–Alaoglu theorem on a closed subset.
\\

Lastly, let us prove the tightness of the magnetic field $\vc{B}_n$ in the space $\mathcal{X}_\vc{B}$ defined as
\begin{equation*}
    \mathcal{X}_\vc{B} := 
C([0,T]; W^{-1,2}(\Td; \R^3)) \cap \Big[ L^\infty (0,T; L^2(\Td; \R^3)) - \mbox{weak-(*)} \Big]
\end{equation*}
\[
\cap \Big[ L^2(0,T; W^{1,2}(\Td; \R^3)) - \mbox{weak} \Big],
\]
with the norm
\begin{equation*}
    \| f \|_{\mathcal{X}_\vc{B}}= \sup_{t\in[0,T]}\| f\|_{L^2(\Td; \R^3))}+\| f\|_{L^2(0,T; W^{1,2}(\Td; \R^3))}+ \|f \|_{C([0,T]; W^{-1,2}(\Td; \R^3))}.
\end{equation*}
Let us choose our compact set $K_{\vc{B},\delta}\subset\mathcal{X}_{\vc{B}}$ as
\begin{equation*}
    K_{\vc{B},\delta}:=\big\{f\in\mathcal{X}_{\vc{B}}:  \|f \|_{\mathcal{X}_{\vc{B}}}\leq L_\delta; \text{ f is equicontinuous in } C([0,T];W^{-1,2}(\Td;\R^3))\big\}.
\end{equation*}
The equicontinuity of $\vc{B}_n$ will be established below.

Now we shall prove the tightness. Note that the norm of $\mathcal{X}_{\vc{B}}$ is mostly under control. The norms of $L^\infty(0,T; L^{2}(\Td))$ and $L^2(0,T; W^{1,2}(\Td))$ are controlled thanks to Lemma \ref{odhady}. It remains to establish tightness with respect to space $C([0,T];W^{-1,2}(\Td;\R^3))$. We have
\begin{align}
    \vc{B}_n(t,\cdot) =& \vc{B}_n(0,\cdot) - \int_0^t\left[ \Curl(\vun \times \vc{B}_n) + \Curl(\eta_n \Curl \vc{B}_n)\right] \D s \br
    +&\alpha\int_0^t \vc{B}_n \, \D W^1_n+\int_0^t \Curl \vc{j}_n\, \D W^2_n.
    \label{z_1}
\end{align}

First of all, we prove the time regularity of $\vc{B}_n$, which holds true uniformly in $n$. Let us start with the deterministic part of \eqref{z_1}, namely,
\begin{equation*}
    \vc{D}_n(t,\cdot)=\vc{B}_n(0,\cdot) - \int_0^t\left[ \Curl(\vun \times \vc{B}_n) + \Curl(\eta_n \Curl \vc{B}_n)\right] \D s.
\end{equation*}

We want to show
\begin{equation}\label{z_2}
    \mathbb{E}\| \vc{D}_n \|_{C([0,T];W^{-1,2}(\Td;\R^3))}\leq C.
\end{equation}

For this, we will use the following well-known embeddings:
\begin{equation}\label{compact-embed1,2}
    W^{1,2}(\Td)\hookrightarrow\hookrightarrow L^2(\Td)\hookrightarrow\hookrightarrow W^{-1,2}(\Td),
\end{equation}
\begin{equation}\label{embed - do holder spoj}
    W^{p,q}(0,T;H) \hookrightarrow C^z([0,T];H) \ \text{ whenever } \, z \in\left[0,p-\frac{1}{q} \right),
\end{equation}
for some Hilbert space $H$. For the first term, it holds simply by using embedding \eqref{compact-embed1,2}
\begin{equation}
    \| \vc{B}_{0,n}\|_{W^{-1,2}(\Td;\R^3)}\leq \|\vc{B}_{0,n} \|_{L^2(\Td;\R^3)}.
\end{equation}
For the second term of $\vc{D}_n$, we see that using Hölder's inequality
\begin{align*}
    \int_0^\tau \|\vun \times \vc{B}_n \|^2_{L^{\frac{3}{2}}(\Td;\R^3)} \dt &\leq \int_0^\tau \|\vun \|_{L^6(\Td;\R^3)}^2 \|\vc{B}_n \|_{L^2(\Td;\R^3)}^2 \dt \br
    & \leq \left( \sup_{t \in [0,T]}\|\vc{B}_n \|_{L^2(\Td;\R^3)}^2\right)\int_0^\tau \|\vun \|_{W^{1,2}(\Td;\R^3)}^2 \dt.
\end{align*}
After the possible application of expectation $\mathbb{E}$, we would get the right-hand side bounded; therefore,
\begin{equation*}
    \vun \times \vc{B}_n \ \text{ is bounded in } L^2(0,T;L^{\frac{3}{2}}(\Td;\R^3)) 
\end{equation*}
$\prst$-a.s., which gives
\begin{equation*}
    \Curl(\vun \times \vc{B}_n) \ \text{ is bounded in } L^2(0,T;W^{-1,\frac{3}{2}}(\Td;\R^3))
\end{equation*}
$\prst$-a.s.

For the last term, by \eqref{M3} we can achieve the following
\begin{equation*}
    \int_0^\tau \|\eta_n \, \Curl \vc{B}_n \|^2_{L^2(\Td;\R^3)} \dt \aleq \int_0^\tau \| \vc{B}_n \|^2_{W^{1,2}(\Td;\R^3)}.
\end{equation*}
Therefore, similar to the previous case, we have
\begin{equation*}
    \Curl(\eta_n \, \Curl \vc{B}_n) \ \text{ is bounded in } L^2(0,T;W^{-1,2}(\Td;\R^3))
\end{equation*}
$\prst$-a.s.

This yields that $\vc{D}_n$ is bounded $\prst$-a.s. in
\begin{equation*}
    W^{1,2}(0,T;W^{-1,2}(\Td;\R^3)),
\end{equation*}
due to the Rellich–Kondrachov theorem and the additional regularity in time from the integration over time. Using \eqref{embed - do holder spoj} for $z=0$, we get \eqref{z_2}. By the definition of $\vc{D}_n$ and the bounds above, it is clearly equicontinuous.

The remaining terms are stochastic integrals. Therefore, in both cases we use the generalization of the Burkholder-Davis-Gundy inequality (see \cite{BrFeHobook}, Proposition 2.3.8):
\begin{equation}\label{Gundy2}
    \mathbb{E} \sup_{t\in[0,T]} \left\lVert \int_0^t \vc{G}(s) \,\D W(s)\right\rVert_{H}^p
    \leq
    C_p \, \mathbb{E}\left( \int_0^T \|\vc{G}(s) \|^2_{H}\, \D s\right)^{\frac{p}{2}},
\end{equation}
where $H$ is a separable Hilbert space and $\|\cdot \|_{H}$ its norm, $p\in (0,\infty)$, $C_p>0$, and $\vc{G}$ are $(\mathcal{F}_t)$-progressively measurable stochastic processes such that
\begin{equation*}
    \mathbb{E}\int_0^T \|\vc{G}(t) \|^2_{H} \D t <\infty.
\end{equation*}

Now, it suffices to check the time regularity of the stochastic integral. Applying \eqref{Gundy2} and \eqref{f5}, we get
\begin{align*}
    \expe{\left\Vert \int_{\tau_1}^{\tau_2} \Curl \vc{j}_n \, \D W^2_n \right\Vert^q_{L^2(\Td;\R^3)}}&\aleq \expe{\int_{\tau_1}^{\tau_2} \|\Curl \vc{j}_n \|_{L^2(\Td;\R^3)}^2 \dt}^{\frac{q}{2}}\br
    &\aleq |\tau_2-\tau_1|^{\frac{q}{2}}\expe{\sup_{t\in [0,T]}\|\Curl \vc{j}_n \|_{L^2(\Td;\R^3)}^2}\br
    &\aleq K_1 |\tau_2-\tau_1|^{\frac{q}{2}}, \ \text{ for every } q\geq 1.
\end{align*}
Note that the last line gives us equicontinuity in space $C([0,T];L^2(\Td;\R^3))$, which provides the equicontinuity we desire by applying \eqref{compact-embed1,2}. Using the Kolmogorov continuity theorem and the embedding \eqref{compact-embed1,2}, we get
\begin{equation*}
    \mathbb{E}\left\Vert\int_0^\cdot \Curl \vc{j}_n \, \D W^2_n  \right\Vert_{C([0,T];W^{-1,2}(\Td;\R^3))}\leq C.
\end{equation*}

Proceeding in the same way, we obtain the second stochastic integral
\begin{align*}
    \expe{\left\Vert \int_{\tau_1}^{\tau_2} \alpha\, \vc{B}_n \, \D W^1_n\right\Vert^q_{L^2(\Td;\R^3)}}
    &\aleq |\tau_2-\tau_1|^{\frac{q}{2}}\expe{\sup_{t\in [0,T]}\|\vc{B}_n \|_{L^2(\Td;\R^3)}^2}\br
    &\aleq K_2 |\tau_2-\tau_1|^{\frac{q}{2}}, \ \text{ for every } q\geq 1.
\end{align*}
Using the Kolmogorov continuity theorem once more and the embedding \eqref{compact-embed1,2}, we get
\begin{equation*}
    \mathbb{E}\left\Vert\int_0^\cdot \alpha\,\vc{B}_n \, \D W^1_n  \right\Vert_{C([0,T];W^{-1,2}(\Td;\R^3))}\leq C.
\end{equation*}

Summarizing our results, this yields the desired regularity as well as
\begin{equation}\label{z_3}
    \mathbb{E}\| \vc{B}_n \|_{C([0,T];W^{-1,2}(\Td;\R^3))}\leq C,
\end{equation}
which implies
\begin{align*}
    \mathbb{P}(\vc{B}_n \notin K_{\vc{B},\delta})=\mathbb{P}(\|\vc{B}_n\|_{\mathcal{X}_{\vc{B} }}\geq L_\delta)
    &\leq \frac{\mathbb{E}\|\vc{B}_n \|_{\mathcal{X}_{\vc{B}}}}{L_\delta} \br
    &\leq \frac{C(\text{DATA})}{L_\delta},
\end{align*}

as the norm on the right-hand side has all its components controlled by data. Setting
\begin{equation*}
    L_\delta=\frac{9C}{\delta}
\end{equation*}
yields the tightness of $\vc{B}_n$ in space $\mathcal{X}_{\vc{B}}$.
\\

To conclude, let $\delta\in (0,1)$ be given. Then, from the previous calculations, there exist compact sets $K_{\text{data},\delta}\subset \mathcal{X}_{\text{data}}$, $K_{\vr,\delta}\subset \mathcal{X}_{\vr}$, $K_{\vu,\delta}\subset \mathcal{X}_{\vu}$, $K_{\vr \vu,\delta}\subset \mathcal{X}_{\vr \vu}$, $K_{p,\delta}\subset \mathcal{X}_{p}$, $K_{\vc{B},\delta}\subset \mathcal{X}_{\vc{B}}$, $K_{W,\delta}\subset \mathcal{X}_{W}$, and $K_{\beta,\delta}\subset \mathcal{X}_{\beta}$. Due to Tychonoff's theorem, the set 
\begin{equation*}
    K_\delta:=K_{\text{data},\delta}\times K_{\vr,\delta}\times K_{\vu,\delta}\times K_{\vr \vu,\delta}\times K_{p,\delta}\times K_{\vc{B},\delta}\times K_{W^1,\delta}\times K_{W^2,\delta}\times K_{\beta,\delta}
\end{equation*}
is compact in $\mathcal{X}$. Using De Morgan's law,
\begin{equation*}
    A\,\cap B=(A^C\cup B^C)^C,
\end{equation*}
we get the tightness. Indeed, we have
\begin{align*}
    \mathbb{P} \Biggl[ \bigg(d_n\times&\vrn \times \vun \times \vrn \vun \times p(\vrn) \times \vc{B}_n \times W^1_n \times W^2_n \times\left\| \sqrt{\beta_n} \vun \right\|_{L^2(\Td; \R^3)}\bigg) \in K_\delta\Biggl]\br
    \geq1& - \mathbb{P}(d_n \notin K_{\text{data}),\delta}- \mathbb{P}(\vrn \notin K_{\vu,\delta})-\mathbb{P}(\vun \notin K_{\vu,\delta})-\mathbb{P}(\vrn \vun \notin K_{\vr \vu,\delta}) \br
    &- \mathbb{P}(p(\vrn) \notin K_{p,\delta})-\mathbb{P}(\vc{B}_n\notin K_{\vc{B},\delta}) - \mathbb{P}(W^1_n\notin K_{W,\delta})-\mathbb{P}(W^2_n\notin K_{W,\delta}) \br
    & -\mathbb{P}\left(\left\| \sqrt{\beta_n} \vun \right\|_{L^2(\Td; \R^3)}\notin K_{\beta,\delta} \right)\br
    \geq 1& - \delta,
\end{align*}
where $d_n$ is defined as
\begin{equation*}
    d_n:=\big(\vrn(0, \cdot), \vm_n(0, \cdot), 
\vc{B}_n(0, \cdot), E(\vrn, \vm_n )(0, \cdot), \vc{j}_n\big).
\end{equation*}

\subsection{Limit passage}
From now on, we shall work with our new subsequence $\Tilde{\mathcal{V}}_n$ from \eqref{SC1}. For the sake of convenience, we shall drop the tilde mark and still denote our variables in the same way unless stated otherwise.

Let us first focus on the induction equation. Observe that the convergences in \eqref{SC1} yield the following results.
\begin{lemma}[]\label{konvergence-magnet.field}
    We have $\prst$-a.s.

\noindent
$\bullet$ \quad 
$\vu_n \to \vu \quad \text{weakly in } L^2(0,T;W^{1,2}(\Td;\R^3)).$

\vspace{0.5em}

\noindent
$\bullet$ \quad 
$\vu_n \to \vc{0} \quad \text{strongly in } L^2((0,T) \times \mathbb{T}^3 \setminus Q_F), \quad 
\vu \in L^2(0,T; W^{1,2}_0(Q_F; \mathbb{R}^3)).$

\vspace{0.5em}

\noindent
$\bullet$ \quad 
$\vc{B}_n \to \vc{B}
\quad \text{strongly in } L^2((0,T) \times \mathbb{T}^3; \mathbb{R}^3) $.

\vspace{0.5em}

\noindent
$\bullet$ \quad 
$\vc{B}_n \times \vu_n \to \vc{B} \times \vu 
\quad \text{weakly in } L^s((0,T) \times \mathbb{T}^3; \mathbb{R}^3) 
\quad \text{for some } s > 1.$

\vspace{0.5em}

\noindent
$\bullet$ \quad 
$\vc{B}_n \rightarrow \vc{B} \ \mbox{in}\ C_{\rm {weak}}([0,T]; L^2(\Td; \R^3)).$
\end{lemma}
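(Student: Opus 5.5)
We fix $\omega$ outside a null set on which all convergences in \eqref{SC1} and the pathwise bounds of Lemma \ref{odhady bez expe} hold, and argue deterministically item by item.

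The first item is simply the convergence of $\tun$ in $\mathcal{X}_\vu$ from \eqref{SC1}.

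For the second item, Lemma \ref{odhady bez expe} gives $\|\vu_n\|_{L^2((0,T)\times(\Td\setminus Q_F))}\le C/\sqrt{n}\to 0$, which is the strong convergence to zero. Weak lower semicontinuity of the norm in $L^2((0,T)\times(\Td\setminus \Ov{Q}_F))$ then gives $\vu=0$ a.e. outside $Q_F$ (the boundary $\partial Q_F$ has measure zero). Since $\vu\in L^2(0,T;W^{1,2}(\Td))$ vanishes on $\Td\setminus Q_F$ and $Q_F$ is of class $C^{2+\xi}$, hence Lipschitz, the standard characterization of $W^{1,2}_0$ yields $\vu(t)\in W^{1,2}_0(Q_F;\R^3)$ for a.a. $t$.

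For the third item, we use that $\vc{B}_n$ is bounded in $L^2(0,T;W^{1,2}(\Td))$ and convergent in $C([0,T];W^{-1,2}(\Td))$. By the compact embedding \eqref{compact-embed1,2} and the Ehrling interpolation inequality,
\[
\|f\|_{L^2}\le \varepsilon\|f\|_{W^{1,2}}+C_\varepsilon\|f\|_{W^{-1,2}}.
\]
Applying this to $\vc{B}_n-\vc{B}$, squaring and integrating in time gives
\[
\|\vc{B}_n-\vc{B}\|^2_{L^2L^2}\le 2\varepsilon^2 C+2C_\varepsilon^2 T\sup_t\|\vc{B}_n-\vc{B}\|^2_{W^{-1,2}}.
\]
Letting $n\to\infty$ and then $\varepsilon\to 0$ yields the strong convergence. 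Alternatively, one could invoke the Aubin--Lions lemma, but the interpolation argument avoids needing a time-derivative bound pathwise.

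For the fourth item, the product of a strongly and a weakly convergent sequence is handled as follows. By Sobolev embedding, $\vu_n$ is bounded in $L^2(0,T;L^6)$, and $\vc{B}_n$ is bounded in $L^\infty(0,T;L^2)$. Interpolating with $L^2(0,T;L^6)$ for $\vc{B}_n$ shows that $\vc{B}_n\times\vu_n$ is bounded in $L^s((0,T)\times\Td)$ for some $s>1$; for instance Hölder gives $L^2(0,T;L^{3/2})$, so any $s\le 3/2$ with mixed exponents works, or $s=1+$ small after interpolation to obtain a genuine Lebesgue space on the cylinder. Hence a subsequence converges weakly in $L^s$. To identify the limit, test with $\phi\in C_c^\infty$ and write
\[
\vc{B}_n\times\vu_n-\vc{B}\times\vu=(\vc{B}_n-\vc{B})\times\vu_n+\vc{B}\times(\vu_n-\vu).
\]
The first term tends to zero because of strong $L^2$ convergence of $\vc{B}_n$ and $L^2$ boundedness of $\vu_n$. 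The second tends to zero by weak convergence of $\vu_n$ in $L^2$, since $\phi\vc{B}\in L^2$. Uniqueness of the limit then gives convergence of the whole sequence.

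For the last item, $\vc{B}_n$ is bounded in $L^\infty(0,T;L^2)$ and converges in $C([0,T];W^{-1,2})$. For $\psi\in L^2$, approximate by $\psi_k\in W^{1,2}$. Then $\sup_t|\langle \vc{B}_n(t)-\vc{B}(t),\psi\rangle|$ is at most
\[
\sup_t\|\vc{B}_n-\vc{B}\|_{W^{-1,2}}\|\psi_k\|_{W^{1,2}}+2C\|\psi-\psi_k\|_{L^2},
\]
which tends to zero. This also shows that $\vc{B}\in C_{\rm weak}([0,T];L^2)$ with $\sup_t\|\vc{B}(t)\|_{L^2}\le C$; the bound follows from weak lower semicontinuity at each fixed $t$, using continuity in $W^{-1,2}$.

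The main obstacle is the fourth item: choosing exponents so that the bound is a genuine $L^s$, $s>1$, on the space-time cylinder. Using $\vu_n\in L^2L^6$ together with $\vc{B}_n\in L^{10/3}((0,T)\times\Td)$, the latter obtained by interpolating $L^\infty L^2$ with $L^2L^6$, gives $s=\frac{5}{4}$ by Hölder, and this is what I would use.
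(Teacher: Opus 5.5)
Your proof is correct and follows the paper's overall structure: each item is argued pathwise, using the weak convergence in $L^2(0,T;W^{1,2}(\Td;\R^3))$, the convergence in $C([0,T];W^{-1,2}(\Td;\R^3))$ and the penalization bound. The differences lie in the tools for the third and fourth items, and in two places where you are more careful than the paper.

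For the strong convergence of $\vc{B}_n$, the paper does not use Ehrling's lemma. It pairs the weak convergence in $L^2(0,T;W^{1,2})$ against the strong convergence in $L^2(0,T;W^{-1,2})$ to get $\int_0^T\!\int_{\Td}|\vc{B}_n|^2 \to \int_0^T\!\int_{\Td}|\vc{B}|^2$. Norm convergence plus weak convergence in the Hilbert space $L^2((0,T)\times\Td)$ then gives strong convergence. This needs only the duality identity $\langle f,f\rangle_{W^{-1,2},W^{1,2}}=\|f\|_{L^2}^2$ and no second appeal to the compact embedding. Equivalently, $\|f\|^2_{L^2(0,T;L^2)}\le C\,\|f\|_{L^2(0,T;W^{1,2})}\|f\|_{L^2(0,T;W^{-1,2})}$ with $f=\vc{B}_n-\vc{B}$ settles it in one line. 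Your Ehrling argument is equally valid; it is just slightly heavier.

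In the fourth item, the paper identifies the weak limit of $\vc{B}_n\times\vu_n$ through the same $W^{-1,2}$--$W^{1,2}$ duality. You instead reuse the strong $L^2$ convergence from the third item, which is a bit more economical.

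Your explicit justification that $\vu(t)\in W^{1,2}_0(Q_F;\R^3)$ (a $W^{1,2}$ function vanishing a.e.\ outside a Lipschitz domain) fills a step the paper leaves implicit. So does the density argument with the uniform $L^\infty(0,T;L^2)$ bound in the last item, since the paper only tests against smooth functions.

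Finally, the exponent issue you flag as the main obstacle is not really one. Your first H\"older bound already places $\vc{B}_n\times\vu_n$ in $L^2(0,T;L^{3/2}(\Td))$. Because $T<\infty$, this embeds into $L^{3/2}((0,T)\times\Td)$, so $s=3/2$ works directly; your $s=5/4$ is also fine.
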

Before giving the proof, note that by the last point of Lemma \ref{konvergence-magnet.field} we can $\prst$-a.s. identify
\begin{equation*}
    \vc{B}(0, \cdot) = \vc{B}_0.
\end{equation*}
\begin{proof}
    The first statement follows immediately from \eqref{SC1}. The second statement follows from \eqref{SC1} and the definition of $\beta_n$. Indeed, it holds
\begin{equation*}
    \| \vun\|^2_{L^2((0,T)\times\Td\setminus Q_F)}\leq  \frac{C(\omega)}{n},
\end{equation*}
    where the constant $C$ may depend on $\omega\in \Omega$. Nevertheless, by fixing $\omega$ and considering only its trajectory, we get the second statement by passing to the limit $n \to \infty$.

    For the strong convergence of $\vc{B}_n$, we shall use the weak convergence in $L^2(0,T;W^{1,2}(\Td;\R^3))$ and the convergence in $C([0,T];W^{-1,2}(\Td;\R^3))$. First, note that
    \begin{equation*}
        C[0,T] \subset L^2[0,T],
    \end{equation*}
    which implies
    \begin{align*}
        \lim_{n\to \infty} \int_0^\tau \intTd{\vc{B}_n \cdot \vc{B}_n} \dt
        &=\int_0^\tau \intTd{|\vc{B}|^2}\dt,
    \end{align*}
    where we used the duality of the spaces $W^{1,2}(\Td;\R^3)$ and $W^{-1,2}(\Td;\R^3)$.

    For the weak convergence of $\vc{B}_n \times \vun$, we shall first establish the bounds independent of $n$. By \eqref{SC1}, it can be shown
    \begin{equation*}
         \int_0^T \| \vc{B}_n \times \vun \|^2_{L^1(\Td;\R^3)} \dt + \int_0^T \| \vc{B}_n \times \vun \|_{L^3(\Td;\R^3)} \dt\leq C(\omega).
    \end{equation*}
By interpolation, we get the weak convergence in $L^s((0,T) \times \Td; \R^3))$ for some $s>1$ to some limit. To identify the weak limit, we shall use the convergence in $C([0,T];W^{-1,2}(\Td;\R^3))$ and the weak convergence for $\vun$ to identify the limit.

Lastly, to strengthen the convergence of $L^\infty(0,T;L^2(\Td;\R^3))$, we need to show
\begin{equation*}
        \int_{\Td} \vc{B}_n \cdot \bm{\varphi} \, \dx \to \int_{\Td} \vc{B} \cdot \bm{\varphi} \, \dx \ \text{ in } C([0,T]), \ \forall \bm{\varphi} \in C^\infty(\Td;\R^3),
\end{equation*}
but this holds true due to convergence in $C([0,T];W^{-1,2}(\Td;\R^3))$ as
\begin{equation*}
    C^\infty(\Td;\R^3) \subset (W^{-1,2}(\Td;\R^3))'=W^{1,2}(\Td;\R^3).
\end{equation*}
\end{proof}

Finally, it follows from the equality of laws that our new $\vc{B}_n$, $\vun$ solve the (stochastic) induction equation $\prst$-a.s.:  
\begin{align} 
    \left[ \intTd{ \vc{B}_n \cdot  \bm{\Psi} } \right]_{t = 0}^{t = \tau} &=\int_0^\tau\intTd{\vc{B}_n \cdot \partial_t \bm{\Psi} }\dt\br
 &-\int_0^\tau\int_{\Td}[  \eta_n \, \Curl \vc{B}_n\cdot \Curl\bm{\Psi}
 +(\vc{B}_n\times\vun )\cdot \Curl\bm{\Psi}]\D x \, \D t \br
 &+\int_0^\tau \left(\int_{\Td}\alpha\vc{B}_n \cdot \bm{\Psi}\, \D x\right) \D W^1_n
    \br 
    &+\int_0^\tau  \left(\int_{\Td}  \Curl\vc{j} \cdot\bm{\Psi} \, \D x\right)\D W^2_n
	\label{A1}	
\end{align}		
for any $\bm{\Psi} \in C^1([0,T] \times \Td;\R^3)$, where 
\[
\vc{B}_n(0, \cdot) \to \vc{B}_0 \ \mbox{in}\ L^2(\Td; \R^3),
\] 
and
\begin{equation} \label{A2}
	\intTd{ \vc{B}_n (\tau, \cdot) \cdot \Grad \psi } = 0 
\end{equation} 
for any $\psi \in C^1(\Td)$. 

We now pass to the limit $n\rightarrow\infty$. Note that we can pass to the limit in all the non-stochastic integral terms on the right-hand side of \eqref{A1} by applying the limits from Lemma \ref{konvergence-magnet.field} above.

For the stochastic integral terms, we apply Lemma 2.6.5 from \cite{BrFeHobook}.
\begin{lemma}[]\label{limit-stoch integ}
    Let $(\Omega, \mathfrak{F}, \mathbb{P})$ be a complete probability space. For $n\in \mathbb{N}$, let $W_n$ be an $(\mathfrak{F}^n_t)$-Wiener process, and let $G_n$ be a real $(\mathfrak{F}^n_t)$-progressively measurable stochastic process such that $G_n \in L^2(0,T)$ $\mathbb{P}$-a.s. Suppose that
    \begin{align*}
        W_n&\rightarrow W \ \text{ in } C([0,T]) \ \text{in probability},\br
        {G}_n &\rightarrow {G} \ \text{ in } L^2(0,T) \ \text{in probability},
    \end{align*}
    where $W$ is a Wiener process adapted to a filtration $(\mathfrak{F}_t)_{t\geq0}$, given by
    \begin{equation*}
        \mathfrak{F}_t=\sigma\left( \sigma_t[G] \cup \sigma_t[W]\right)
    \end{equation*}
 Then, after a possible change on a set of zero measure in $\Omega\times(0,T)$, $G$ is $(\mathfrak{F}_t)$-progressively measurable, and
    \begin{equation*}
        \int_0^\cdot G_n \, \D W_n \rightarrow \int_0^\cdot G \, \D W \ \text{ in } L^2(0,T) \ \text{in probability}.
    \end{equation*}
\end{lemma}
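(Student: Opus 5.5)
The statement to prove is Lemma \ref{limit-stoch integ} (Lemma 2.6.5 of \cite{BrFeHobook}). I would first reduce it to elementary processes, for which the stochastic integral is an explicit Riemann-type sum, and then control the error uniformly in $n$ by It\^o's isometry, localized through stopping times. Progressive measurability of $G$ with respect to $(\mathfrak{F}_t)$ is immediate: $\mathfrak{F}_t$ contains $\sigma_t[G]$, so $G$ is adapted. Since $G\in L^2(0,T)$ a.s., a version that is progressively measurable exists after modification on a $\D t\otimes\prst$-null set. This uses the standard result that an adapted, measurable process admits a progressively measurable modification.

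The first task is to check that $W$ is an $(\mathfrak{F}_t)$-Wiener process, so that $\int G\,\D W$ is defined. Here I need that $W(t)-W(s)$ is independent of $\mathfrak{F}_s$. For continuous bounded functionals $h$ of $(G|_{[0,s]},W|_{[0,s]})$ and $k$ of the increment, the analogous independence holds for $(G_n,W_n)$, because $W_n$ is an $(\mathfrak{F}^n_t)$-Wiener process and $G_n$ is adapted. Hence $\mathbb{E}[h_n k_n]=\mathbb{E}h_n\,\mathbb{E}k_n$. I pass to the limit using convergence in probability and dominated convergence.

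For the main approximation, fix $\varepsilon>0$ and a time mollification or piecewise-constant averaging operator $R_m$, for example $R_m g(t)=m\int_{t_{k-1}}^{t_k}g\,\D s$ on $[t_k,t_{k+1})$ with $t_k=kT/m$. This shift makes $R_m G$ adapted, and $R_m g\to g$ in $L^2(0,T)$ for each $g$, with $\|R_m g\|_{L^2}\le\|g\|_{L^2}$. I then split
\[
\int_0^\cdot G_n\,\D W_n-\int_0^\cdot G\,\D W=\int_0^\cdot (G_n-R_mG_n)\,\D W_n+\Big(\int_0^\cdot R_mG_n\,\D W_n-\int_0^\cdot R_mG\,\D W\Big)+\int_0^\cdot (R_mG-G)\,\D W .
\]
The middle term is a finite sum $\sum_k (R_mG_n)(t_k)\,(W_n(t\wedge t_{k+1})-W_n(t\wedge t_k))$. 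It converges in probability, uniformly in $t$, because $R_mG_n\to R_mG$ pointwise in the coefficients (as $G_n\to G$ in $L^2$) and $W_n\to W$ in $C[0,T]$. The third term tends to $0$ as $m\to\infty$ by the standard continuity of the It\^o integral in probability: for any $\delta,\kappa>0$,
\[
\prst\Big(\sup_t\Big|\int_0^t H\,\D W\Big|>\delta\Big)\le \frac{\kappa}{\delta^2}+\prst\big(\|H\|_{L^2}^2>\kappa\big),
\]
which is proved via the stopping time $\inf\{t:\int_0^t|H|^2>\kappa\}$ and Doob's inequality.

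The first term is the main obstacle, since it must be small uniformly in $n$. I apply the same inequality, now with $W_n$, which is valid because $G_n-R_mG_n$ is $(\mathfrak{F}^n_t)$-adapted. It then suffices to show that $\sup_n\prst(\|G_n-R_mG_n\|_{L^2}>\kappa)\to0$ as $m\to\infty$. This follows from the estimate
\[
\|G_n-R_mG_n\|\le 2\|G_n-G\|+\|G-R_mG\|,
\]
where the first term is small in probability for $n$ large and the second is small for $m$ large. The finitely many remaining $n$ are handled individually. Choosing $m$ first and then $n$, all three terms are small in probability. Uniform convergence on $[0,T]$ gives convergence in $L^2(0,T)$ in probability, which concludes the proof.
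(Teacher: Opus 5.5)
Your proof is correct. The paper gives no argument of its own for this lemma. It quotes Lemma 2.6.5 of \cite{BrFeHobook}, which is formulated for cylindrical Wiener processes and Hilbert--Schmidt-valued integrands, yet it only applies the result to real integrands such as $\int_{\mathbb{T}^3}\alpha\vc{B}_n\cdot\bm{\Psi}\,\dx$ and one-dimensional $W_n$.

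You instead give a self-contained proof of exactly this scalar case, along the usual lines for such stability results. You first show that $W$ is an $(\mathfrak{F}_t)$-Wiener process by passing to the limit in the independence relations. This step is really needed, since the statement only says that $W$ is a Wiener process adapted to $(\mathfrak{F}_t)$. You then regularize adaptedly in time, pass to the limit pathwise in the regularized integrals, and control both remainders uniformly in $n$ by the stopping-time bound $\mathbb{P}(\sup_t|\int_0^tH\,\D W|>\delta)\le\kappa/\delta^2+\mathbb{P}(\|H\|^2_{L^2(0,T)}>\kappa)$.

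The shifted piecewise-constant averages are what keep this elementary. $R_m$ is a contraction on $L^2(0,T)$ converging strongly to the identity, and $\int_0^\cdot R_mG_n\,\D W_n$ is a finite Riemann sum depending continuously on $(G_n,W_n)\in L^2(0,T)\times C[0,T]$. The citation, in turn, buys the infinite-dimensional version, a generality the paper never uses.

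Two small repairs are needed. First, the average must be normalized as $\frac{m}{T}\int_{t_{k-1}}^{t_k}g\,\D s$ on $[t_k,t_{k+1})$, with $R_mg:=0$ on $[0,t_1)$, for $R_m$ to be a contraction converging to the identity. Second, $G$ is only an $L^2(0,T)$-valued random variable, so its adaptedness is meant in the sense of random distributions. The cleanest progressively measurable representative is $\limsup_{h\downarrow0,\,h\in\mathbb{Q}}h^{-1}\int_{(t-h)\vee0}^{t}G\,\D s$, which coincides with $G$ a.e.\ by Lebesgue's differentiation theorem. If $\sigma_s[G]$ carries a right-continuous augmentation, independence of $W(t)-W(s)$ from $\mathfrak{F}_s$ follows from the unaugmented case at times $r\downarrow s$ together with the continuity of $W$.
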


Observe that the convergence in probability in Lemma \ref{limit-stoch integ} is not a problem, as it is a well-known fact that we can always find a subsequence that converges almost surely.
\\

Choosing
\begin{equation*}
    G_n= \intTd{ \alpha \vc{B}_n \cdot \bm{\Psi} }
\end{equation*}
gives 
\begin{align*}
 \int_0^\tau \left( \intTd{ \alpha \vc{B}_n \cdot \bm{\Psi} } \right) \D W^1_n \rightarrow& \int_0^\tau \left( \intTd{ \alpha \vc{B} \cdot \bm{\Psi} } \right) \D W^1,
\end{align*}
as
\begin{align*}
    \mathbb{E}\int_0^T \left|\intTd{\alpha (\vc{B}_n-\vc{B}) \cdot \bm{\Psi}} \right|^2 \dt&\aleq \mathbb{E} \int_0^T \|\vc{B}_n-\vc{B} \|^2_{L^2(\Td;\R^3)} \dt \br
    &= \mathbb{E} \|\vc{B}_n-\vc{B} \|^2_{L^2((0,T)\times\Td;\R^3)},
\end{align*}
which goes to zero by Lemma \ref{konvergence-magnet.field}. By lemmas \ref{limit-stoch integ} and the convergence of the Wiener process from \eqref{SC1}, we get the convergence of the stochastic integral. The second stochastic integral follows analogously, with
\begin{equation*}
    G_n=\intTd{\Curl \vc{j}_n \cdot \bm{\Psi}}.
\end{equation*}

Consequently, we get \eqref{w4} and \eqref{w5} from Definition \ref{wD1}.
\\

Turning to the fluid equations (i.e., the equation of momentum and the equation of continuity), we fix $\omega\in \Omega$ and work with individual trajectories. Due to this fact, the proof is the same as in the deterministic case, as was done in \cite{FePokTryKar} or in \cite{FeNeSt} with penalization. Note that this approach is only possible because there is no stochastic integral in fluid equations.

Due to the equivalence in law, we have $\mathbb{P}$-a.s.
	\begin{equation} \label{F2} 
	\left[ \intTd{ \vrn\, \varphi } \right]_{t=0}^{t = \tau} 
	= \int_0^\tau \intTd{ \Big[ \vrn\, \partial_t \varphi + \vm_n \cdot 
		\Grad \varphi \Big] } \dt 
\end{equation}	
for any $\varphi \in C^1([0,T] \times \Td)$. Similarly, as in the previous section, we can strengthen our convergence for $\vrn$ acquired from Lemma \ref{odhady}. Namely, using \eqref{F2}, it holds $\mathbb{P}$-a.s.
\begin{equation}\label{C_W(L^gamma)}
        \vrn \to \vr \ \mbox{in}\ C_{\rm {weak}}([0,T]; L^\gamma(\Td)).
\end{equation}

By equivalence of laws, the momentum equation holds $\mathbb{P}$-a.s. 
\begin{align}
	\left[ \intTd{ \vm_n \cdot \bfphi } \right]_{t = 0}^{t = \tau} &= \int_0^\tau \intTd{ \Big[ \vm_n \cdot \partial_t \bfphi + 
		{\vm_n \otimes \vun} :\Grad \bfphi + {p(\vrn)} \Div \bfphi \Big] } \dt \br &+\int_0^\tau \intTd{ {\vrn \Grad G \cdot \bfphi }}- \int_0^\tau \intTd{ \mathbb{S}(\Grad \vun) : \Grad \bfphi } \dt  \br
        &+ 
	\int_0^\tau \intTd{ {(\Curl \vc{B}_n \times \vc{B}_n}) \cdot \bfphi } \dt- \int_0^\tau \intTd{\beta_n \, \vun\cdot \bfphi}\dt\br
	\label{z2}
\end{align}	
for any $\bfphi \in C^\infty(\Td;\R^3)$. Similarly to \eqref{C_W(L^gamma)}, we get $\mathbb{P}$-a.s.
\begin{equation}\label{C_W(L^q)}
        \vm_n \to \vm \ \mbox{in}\ C_{\rm {weak}}\left([0,T]; L^\frac{2\gamma}{\gamma+1}(Q_F;\R^3)\right).
    \end{equation}

Here we point out that we cannot achieve weak convergence \eqref{C_W(L^q)} for the whole space $\mathbb{T}^3$, but only for the domain $Q_F$, as the penalization term can be bounded at most by
    \begin{equation*}
        \left|n\int_{\mathbb{T}^3\setminus Q_F}\vu_n \cdot\bm{\varphi} \D x \right|\leq C\sqrt{n}, 
    \end{equation*}
    which diverges as $n\rightarrow\infty$.

Exactly as in the previous section, \eqref{C_W(L^gamma)}, \eqref{C_W(L^q)} help us identify our initial condition
\[
\vr(0, \cdot) = \vr_{0}, \ \vm(0, \cdot) = \vm_0,\ \prst\text{-a.s}.
\]

Now we are ready to pass to the limit in the continuity equation \eqref{F2}. Using Lemma \ref{odhady}, \eqref{C_W(L^gamma)}, \eqref{C_W(L^q)}, and the assumption on convergence of initial conditions. Sending  $n \rightarrow \infty$, we obtain
\begin{equation} \label{F3} 
	\left[ \intTd{ \vr \varphi } \right]_{t=0}^{t = \tau} 
	= \int_0^\tau \intTd{ \Big[ \vr \partial_t \varphi + \vm \cdot 
		\Grad \varphi \Big] } \dt 
\end{equation}	
for any $\varphi \in C^1([0,T] \times \Td)$ $\prst$-a.s. 

So far, there has been no need to extract another subsequence. Observe that for the limit \eqref{F3}, it holds
\begin{equation}\label{nezapornost hustoty}
    \| \vr (t,\cdot)\|_{L^1(\Td)} =\| \vr_0(\cdot)\|_{L^1(\Td)}
\end{equation}
by choice $\varphi = 1$ and using the assumption \eqref{P1}.
\\

In particular, when choosing the test functions $\varphi \in C^1([0,T] \times \Ov{Q}_F)$, we obtain
\begin{equation*}
    \left[ \int_{Q_F} \vr \varphi \ dx \right]_{t=0}^{t = \tau} 
= \int_0^\tau \int_{Q_F} \Big[ \vr \partial_t \varphi + \vr \vu \cdot 
\Grad \varphi \Big] \ dx \dt,
\end{equation*}
which is exactly the formulation of the equation of continuity \eqref{w2} as in Definition \ref{wD1}.

To be more precise, we need to show that the limiting density and velocity go to zero outside $Q_F$. Indeed, from Lemma \ref{konvergence-magnet.field}, we know $\prst$-a.s.
\begin{equation*}
    \vu_n \rightarrow \vc{0} \ \text{strongly in } L^2((0,T)\times\mathbb{T}^3\setminus Q_F;\R^3).
\end{equation*}
Thus, from the weak formulation of the continuity equation \eqref{F3}, we get $\prst$-a.s.
\begin{equation*}
    0= \int_0^T\int_{\mathbb{T}^3\setminus Q_F}\vr \partial_t \varphi\, \D x\, \D t,\ \forall \varphi \in C_c^1([0,T) \times (\mathbb{T}^3\setminus Q_F)),
\end{equation*}
which implies that 
\begin{equation*}
    \vr(t,\cdot)=\vr_0(\cdot) \ \text{ in } (0,T)\times(\mathbb{T}^3\setminus Q_F).
\end{equation*}
Thus, we may identify our weak limit of $\vr_n$ as zero. This implies the strong convergence in space $L^1((0,T)\times \Td \setminus Q_F)$ as formulated in the following trivial observation.
\begin{lemma}\label{strongL1}
    Let $Q\subset \R^3$ be a bounded Lipschitz domain, $T\in \R$ and $\{g_n\}_n$ be a family of non-negative functions such that $g_n\rightarrow 0$ in $C_{\text{weak}}([0,T];L^q(Q))$ for $q > 1$. Then we have
    \begin{equation*}
        g_n\rightarrow 0 \ \text{ strongly in } L^1((0,T)\times Q).
    \end{equation*}
\end{lemma}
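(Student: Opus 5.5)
The key observation is that for non-negative functions the $L^1$ norm is itself a pairing with a test function, namely the constant $1$. Since $\mathbf{1}_Q\in L^{q'}(Q)$ for the bounded domain $Q$ (with $q'$ the dual exponent of $q>1$), weak convergence can be tested against it. So we aim to write
\[
\|g_n\|_{L^1((0,T)\times Q)}=\int_0^T\int_Q g_n\,\dx\,\dt=\int_0^T h_n(t)\,\dt,\qquad h_n(t):=\int_Q g_n(t,\cdot)\,\dx ,
\]
where non-negativity of $g_n$ is exactly what lets us drop the absolute value.

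We use the definition of convergence in $C_{\rm weak}([0,T];L^q(Q))$ in the form applied earlier in the paper, for instance in the proof of Lemma~\ref{konvergence-magnet.field}. That is, for every $\varphi\in L^{q'}(Q)$ (or at least every $\varphi$ in a dense class such as $C^\infty(\overline Q)$, which already contains the constant $1$),
\[
\int_Q g_n(t,\cdot)\,\varphi\,\dx\to 0 \quad\text{uniformly in } t\in[0,T].
\]
Taking $\varphi\equiv 1$ gives $\sup_{t\in[0,T]}|h_n(t)|\to 0$. Since $T<\infty$, it follows that
\[
\int_0^T h_n\,\dt\le T\sup_{t\in[0,T]} h_n(t)\to 0,
\]
which is the claim.

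If one only wants pointwise-in-$t$ weak convergence, the argument still goes through with a small change. By uniform boundedness, $\sup_{t,n}\|g_n(t)\|_{L^q(Q)}$ is finite, so the $h_n$ are uniformly bounded by $C|Q|^{1/q'}$. Moreover $h_n(t)\to 0$ for every $t$, and dominated convergence on $(0,T)$ then concludes.

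There is no real obstacle here. The only points to check are that $1\in L^{q'}(Q)$, which holds because $Q$ is bounded and $q>1$, and that non-negativity is what converts the weak statement into a norm statement. Applied with $g_n=\vr_n$ on $\Td\setminus Q_F$ and $q=\gamma$, the lemma yields $\vr_n\to0$ strongly in $L^1((0,T)\times(\Td\setminus Q_F))$.
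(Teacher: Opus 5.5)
Your main argument is correct, and it is the ``trivial observation'' the paper has in mind; the paper states the lemma without proof. Non-negativity makes $\|g_n\|_{L^1((0,T)\times Q)}=\int_0^T\int_Q g_n\cdot 1\,\dx\,\dt$. The constant $1$ is an admissible test function in $L^{q'}(Q)$ because $Q$ is bounded. Convergence in $C_{\rm weak}([0,T];L^q(Q))$ means uniform-in-$t$ convergence of such pairings, so $\|g_n\|_{L^1((0,T)\times Q)}\le T\sup_{t\in[0,T]}\int_Q g_n(t,\cdot)\,\dx\to0$.

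\textbf{The fallback paragraph is wrong.} The variant for merely pointwise-in-$t$ weak convergence should be dropped. Banach--Steinhaus at a fixed $t$ gives $\sup_n\|g_n(t)\|_{L^q(Q)}<\infty$ for that $t$ only. Nothing makes this bound uniform in $t$, so you have no dominating function on $(0,T)$.

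The conclusion actually fails in that setting. Take $T=1$, $0\le\phi\in C^\infty_c(0,1)$ with $\int_0^1\phi\,\D s=1$, and $g_n(t,x)=n\phi(nt)$. Then $g_n(t)\to0$ in $L^q(Q)$ for every fixed $t\in[0,1]$, since $\phi(0)=0$ and $\phi(nt)=0$ once $nt\ge1$. Yet $\|g_n\|_{L^1((0,1)\times Q)}=|Q|$ for all $n\ge1$.

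The variant becomes true only under the extra hypothesis $\sup_{n,t}\|g_n(t)\|_{L^q(Q)}<\infty$. In the paper's application this bound is available pathwise from Lemma \ref{odhady bez expe}. Under genuine $C_{\rm weak}$ convergence it also follows from Banach--Steinhaus, because the pairings converge uniformly and are therefore bounded in $(n,t)$. In that case your first argument already suffices.
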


Applying Lemma \ref{strongL1}, we will obtain
\begin{equation*}
    \vr_n\rightarrow 0 \ \text{ strongly in } L^1((0,T)\times\mathbb{T}^3\setminus Q_F).
\end{equation*}
By interpolation, it gives
\begin{equation}\label{silna konverg}
    \| \vr_n\|_{L^q((0,T)\times \mathbb{T}^3\setminus Q_F)}\leq \| \vr_n\|_{L^1((0,T)\times \mathbb{T}^3\setminus Q_F)} ^\alpha \| \vr_n\|_{L^\gamma((0,T)\times \mathbb{T}^3\setminus Q_F)}^{1-\alpha},
\end{equation}
\begin{equation*}
    \text{for }\ \frac{1}{q}=\alpha+\frac{1-\alpha}{\gamma},\ \text{ where } \alpha \in [0,1].
\end{equation*}
Consequently, it gives
\begin{equation}\label{z_5}
    \vr_n\rightarrow 0 \ \text{ strongly in } L^q((0,T)\times \mathbb{T}^3\setminus Q_F)\text{ for any } 1\leq q<\gamma.
\end{equation}
We point out that the strong convergence and \eqref{nezapornost hustoty} imply
\begin{equation}\label{zachovani hmoty}
    \| \vr (t,\cdot)\|_{L^1(Q_F)} =\| \vr_0(\cdot)\|_{L^1(Q_F)}.
\end{equation}

We now pass to the limit in the momentum equation \eqref{z2}. In contrast to the case of the continuity equation, we cannot achieve the result in the whole domain $\Td$, as \eqref{C_W(L^q)} holds only in $Q_F$. However, we can avoid this problem by choosing test functions of class $\bfphi \in C^1_c([0,T]\times Q_F;\R^3)$, thus eliminating the penalization term, as its compact support is considered only in $\Td \setminus Q_F$. We now pass to the limit. Thanks to \eqref{SC1}, \eqref{C_W(L^gamma)}, \eqref{C_W(L^q)}, and Lemma \ref{konvergence-magnet.field} (to handle the Lorentz force term), and based on the assumptions about the initial data, we may pass to the limit in most terms of the momentum equation \eqref{z2}. In particular, for the convective term, note that it holds by Hölder's inequality
\begin{align*}
    \int_0^\tau \|\vm_n \otimes \vun \|^2_{L^q(\Td;\R^{3\times3})} \dt &\leq  \int_0^T \|\vm_n \|^2_{L^\frac{2\gamma}{\gamma+1}(\Td;\R^3)} \| \vun\|^2_{L^6(\Td;\R^3)}\dt \br
    &\leq \left( \sup_{t\in[0,T]}\|\vm_n \|^2_{L^\frac{2\gamma}{\gamma+1}(\Td;\R^3)}\right) \int_0^T \| \vun\|^2_{W^{1,2}(\Td;\R^3)}\dt,
\end{align*}
for
\begin{equation*}
    q=\frac{6\gamma}{4\gamma+3}=\left(\frac{1}{6}+\frac{\gamma+1}{2\gamma} \right)^{-1}.
\end{equation*}
This implies that
\begin{equation*}
    \vm_n \otimes \vun \to \Ov{\vm \otimes \vu} \ \text{ weakly in } L^2(0,T;L^q(\Td;\R^{3\times3})).
\end{equation*}
Passing to the suitable subsequence, we obtained the limit of \eqref{z2}  
\begin{align}
	\left[ \intTd{ \vm \cdot \bfphi } \right]_{t = 0}^{t = \tau} &= \int_0^\tau \intTd{ \Big[ \vm \cdot \partial_t \bfphi + 
		\Ov{\vm \otimes \vu} :\Grad \bfphi + \Ov{p(\vr)} \Div \bfphi \Big] } \dt \br &- \int_0^\tau \intTd{ \mathbb{S} : \Grad \bfphi } \dt + 
	\int_0^\tau \intTd{ {\Curl \vc{B} \times \vc{B}} \cdot \bfphi } \dt \br 
    &+\int_0^\tau \intTd{ {\vr \Grad G \cdot \bfphi }} \br
	\vm(0, \cdot) &= \vm_0
	\label{F4}
\end{align}	
for any $\bfphi \in C^1_c([0,T] \times Q_F;\R^3)$ $\prst$-a.s.. 

Finally, fixing $\omega \in \Omega$ and applying deterministic arguments, we establish
\begin{align} 
\vm = \vr \vu, \ \Ov{ \vm \otimes \vu} = \vr \vu \otimes \vu, 
\label{F5}
\end{align}	 
and 
\begin{equation} \label{F6}
	\Ov{p(\vr)} = p(\vr) \ \ \mbox{in}\ (0,T) \times K 
\ \mbox{for any compact}\ K \subset Q_F,
\end{equation}
where \eqref{F6} follows from \emph{the oscillation defect measure} argument  that holds for $\gamma>\frac{3}{2}$ (see, e.g., \cite{FePokTryKar}). This means that the weak limits are unique; therefore, there was no need to extract a subsequence. The renormalized continuity equation \eqref{w2re} is proven in the same paper \cite{FePokTryKar}.

\subsection{Energy inequality}
\label{07-energ-ineq}
To complete the proof, it only remains to pass to the limit in the energy inequality
\begin{align}
    &\left[ \int_{\Td} E(\vrn, \vrn \vun)  \dx + 
\frac{1}{2}\intTd{ |\vc{B}_n|^2  } \right]^{t=\tau}_{t=0} + \int_0^\tau \intTd{ \beta _n |\vun|^2 } \dt  \br&+ 
			\int_0^\tau \intTd{ \mathbb{S}(\Grad \vun) : \Grad \vun }\dt
			+ \int_0^\tau \intTd{ \eta_n |\Curl \vc{B}_n |^2 } \dt 
			\br 
			&\leq\frac{1}{2} \left(\int_0^\tau \intTd{ \left[ \alpha^2 |\vc{B}_n|^2(t, \cdot) +  |\Curl \vc{j}|^2\right] } \dt 
\right)+\int_0^\tau \intTd{\vrn \vun \cdot \Grad G}\dt\br &+ \int_0^\tau \left( \intTd{ \alpha |\vc{B}_n|^2(t, \cdot) } \right) \D W^1_n + \int_0^\tau \left( \intTd{  \Curl \vc{j} \cdot 
\vc{B}_n(t, \cdot)} \right) \D W^2_n.
\label{final_energ}
\end{align}

Given the convergences proved above, we can pass to the limit in \eqref{final_energ} only by means of the weak lower semi--continuity of the energy and the convergences acquired above. For example, it holds $\prst$-a.s.
\begin{align*}
    \int_0^\tau \intTd{\left[\mu |\Grad \vu|^2+(\mu + \nu) |\Div \vu|^2 \right]}\dt&=\int_0^\tau \intTd{\mathbb{S}(\Grad \vu): \Grad \vu}\dt \br
    &\leq \liminf_{n\to \infty} \int_0^\tau \intTd{\mathbb{S}(\Grad \vun): \Grad \vun}\dt
\end{align*}
and similarly
\begin{align*}
    \int_0^\tau \intTd{ \eta |\Curl \vc{B} |^2 } \dt\leq \liminf_{n\to \infty} \int_0^\tau \intTd{ \eta_n |\Curl \vc{B}_n |^2 } \dt.
\end{align*}

Convergence of the gravitational term follows from the strong convergence of $\vrn$ and the weak convergence of $\vun$. Furthermore, by the strong convergence of $\vc{B}_n$,
\begin{equation*}
    \int_0^\tau \intTd{ |\vc{B}_n|^2(t, \cdot) } \dt  \rightarrow \int_0^\tau \intTd{ |\vc{B}|^2(t, \cdot) } \dt.
\end{equation*}

The initial conditions were chosen so that
\begin{equation*}
    \lim_{n\to \infty} \int_{\Td} \left[ E(\vrn, \vrn \vun)(0,\cdot)  + 
\frac{1}{2}|\vc{B}_n(0,\cdot)|^2\right] \dx = \int_{Q_F} E(\vr_0, \vm_0)  \dx + 
\frac{1}{2}\intTd{ |\vc{B}_0|^2}.
\end{equation*}

For the remaining non-stochastic term, the weak lower semi-continuity and strict convexity of $E(x,\vc{y})$ yield
\begin{align*}
    &\int_{Q_F} E(\vr, \vr \vu)(\tau,\cdot) \, \dx + \frac{1}{2}\intTd{|\vc{B}(\tau,\cdot)|^2}\br
    & \leq \intTd{\left[ E(\vr, \vr \vu)(\tau,\cdot)+\frac{1}{2}|\vc{B}(\tau,\cdot)|^2\right]}\br
    &\leq \lim_{n\to \infty} \int_{\Td} \left[ E(\vrn, \vrn \vun)(\tau,\cdot)  + 
    \frac{1}{2}|\vc{B}_n(\tau,\cdot)|^2\right] \dx,
\end{align*}
where in the first inequality, we bounded the integral from above by the fact that the term $E(\vr, \vr \vu)$ is nonnegative.

It remains to address the stochastic terms. We begin with the simpler term, showing that $\prst$-a.s.
\begin{equation*}
    \int_0^\tau \left( \intTd{  \Curl \vc{j}_n \cdot \vc{B}_n(t, \cdot)} \right) \D W^2_n \to \int_0^\tau \left( \intTd{  \Curl \vc{j} \cdot 
\vc{B}(t, \cdot)} \right) \D W^2.
\end{equation*}

To apply Lemma \ref{limit-stoch integ}, it suffices to show convergence in probability. Indeed
\begin{align*}
    \mathbb{E}&\left| \int_0^T \intTd{\Curl \vc{j}_n \cdot (\vc{B}_n-\vc{B})}\dt\right|\br
    & \leq \expe{\| \Curl \vc{j}_n\|_{L^2((0,T)\times \Td;\R^3)} \| \vc{B}_n - \vc{B}\|_{L^2((0,T)\times \Td;\R^3)}} \br
    & \leq \left( \mathbb{E}\| \Curl \vc{j}_n\|^2_{L^2((0,T)\times \Td;\R^3)}\right)^{\frac{1}{2}} \left(\mathbb{E}\| \vc{B}_n - \vc{B}\|^2_{L^2((0,T)\times \Td;\R^3)} \right)^{\frac{1}{2}},
\end{align*}
which gives the result.

For the second stochastic integral term, we need to show that
\begin{equation}\label{4z}
    \intTd{ | \vc{B}_n |^2} \to \intTd{ | \vc{B} |^2} \ \text{ strongly in }  L^2(0,T) \text{ in probability}.
\end{equation}

We know from Lemma \ref{konvergence-magnet.field} that $\prst$-a.s.
\begin{align*}
    \vc{B}_n &\to \vc{B} \ \text{ strongly in } L^2((0,T)\times \Td;\R^3),\br
    \vc{B}_n &\to \vc{B} \ \text{ weakly-(*) in } L^\infty(0,T;L^2(\Td;\R^3).
\end{align*}

By interpolation, we obtain
\begin{equation*}
    \|f\|_{L^q(0,T)}\leq \| f \|^\alpha_{L^2(0,T)} \|f \|^{1-\alpha}_{L^\infty(0,T)}, \ \text{for } q = \frac{2}{\alpha}, \alpha\in [0,1].
\end{equation*}
In particular, by the choice
\begin{equation*}
    f=\| \vc{B}_n - \vc{B}\|_{L^2(\Td;\R^3)},
\end{equation*}
we get the strong convergence in $L^q(0,T)$ for $1\leq q<\infty$, which implies \eqref{4z}. Thus, after applying Lemma \ref{limit-stoch integ}, we get $\prst$-a.s.
\begin{equation*}
    \int_0^\tau \left( \intTd{\alpha |\vc{B}_n(t,\cdot)|^2}\right)\D W^1_n \to \int_0^\tau \left( \intTd{\alpha |\vc{B}(t,\cdot)|^2}\right)\D W^1.
\end{equation*}
Finally, combining the convergence results obtained above, we may pass to the limit in the energy inequality and recover \eqref{w6}. Consequently, the limit object
\[
\mathcal{U} = \Big( (\Omega, \mathcal{A}, (\mathcal{A}_t)_{t \geq 0}, 
\prst ) , \vr, \vu, \vc{B}, W^1, W^2 \Big), 
\] 	
satisfies all conditions of Definition \ref{wD1}. This completes the proof of Theorem \ref{MT1}.
\\
\\
Acknowledgment: The author would like to thank his supervisor, Prof.~Eduard Feireisl, for valuable discussions and guidance, without whom this work would not have been completed.

\def\bibfont{\hfuzz=2pt}
\printbibliography[heading=bibintoc]

@book{FeireislEduard2017SLiT,
author = {Feireisl, Eduard and Novotný, Antonín},
address = {Cham},
copyright = {Springer International Publishing AG 2017},
edition = {2nd ed. 2017 edition.},
isbn = {3319637819},
issn = {2297-0320},
language = {eng},
publisher = {Springer Nature},
series = {Advances in Mathematical Fluid Mechanics},
title = {Singular Limits in Thermodynamics of Viscous Fluids},
year = {2017},
}

@book{FeireislEduard2003DoVC,
author = {Feireisl, Eduard},
address = {Oxford},
isbn = {0198528388},
language = {eng},
publisher = {Oxford University Press},
series = {Oxford Lecture Series in Mathematics and Its Applications},
title = {Dynamics of Viscous Compressible Fluids},
year = {2003},
}

@unknown{WangH21,
author = {Huaqiao Wang},
year = {2021},
month = {08},
pages = {},
title = {Martingale solutions for the compressible MHD systems with stochastic external forces},
doi = {10.48550/arXiv.2108.03844}
}

@article{WangH,
  author       = {Huaqiao Wang},
  title        = {Low Mach number limit of solutions to the stochastic compressible magnetohydrodynamic equations},
  journal      = {Journal of Dynamics and Differential Equations},
  volume       = {35},
  number       = {3},
  pages        = {2413--2451},
  year         = {2023}
}

@article{Lassner,
  author    = {Gerd Lassner},
  title     = {Über ein Rand-Anfangswertproblem der Magnetohydrodynamik},
  journal   = {Archive for Rational Mechanics and Analysis},
  volume    = {25},
  number    = {5},
  pages     = {388--405},
  year      = {1967},
  month     = jan,
  doi       = {10.1007/BF00291938},
  publisher = {Springer}
}

@article{BaFeLMMiYu,
  author    = {Basa\v{r}i\'{c}, Danica
               and Feireisl, Eduard
               and Luk\'{a}\v{c}ov\'{a}-Medvid\textquotesingle{}ov\'{a}, M\'{a}ria
               and Mizerov\'{a}, Hana
               and Yuan, Yuhuan},
  title     = {Penalization method for the {N}avier--{S}tokes--{F}ourier system},
  journal   = {ESAIM: Mathematical Modelling and Numerical Analysis},
  year      = {2022},
  volume    = {56},
  number    = {6},
  pages     = {1911--1938},
  issn      = {2822-7840},
}

@book{BrFeHobook,
author = {Breit, Dominic and Feireisl, Eduard and Hofmanová, Martina},
address = {Berlin},
booktitle = {Stochastically forced compressible fluid flows},
isbn = {978-3-11-049050-3},
language = {eng},
publisher = {De Gruyter},
series = {De Gruyter series in applied and numerical mathematics, volume 3},
title = {Stochastically forced compressible fluid flows },
year = {2018},
}

@article{FeNeSt,
author = {Feireisl, E. and Neustupa, J. and Stebel, J.},
issn = {0022-0396},
journal = {Journal of Differential Equations},
language = {eng},
number = {1},
pages = {596-606},
title = {Convergence of a Brinkman-type penalization for compressible fluid flows},
volume = {250},
year = {2011},
}

@book{FePokTryKar,
author = {Feireisl, Eduard and Karper, Trygve G and Pokorný, Milan},
address = {Cham},
copyright = {Springer International Publishing Switzerland 2016},
edition = {1st ed. 2016 edition.},
isbn = {3319448358},
issn = {2297-0320},
language = {eng},
publisher = {Springer Nature},
series = {Advances in Mathematical Fluid Mechanics},
title = {Mathematical Theory of Compressible Viscous Fluids: Analysis and Numerics},
year = {2016},
}

@book{billing,
  address = {New York},
  author = {Billingsley, Patrick},
  description = {q-paper},
  edition = {Second},
  isbn = {0-471-19745-9},
  mrclass = {60B10 (28A33 60F17)},
  mrnumber = {MR1700749 (2000e:60008)},
  note = {A Wiley-Interscience Publication},
  publisher = {John Wiley \& Sons Inc.},
  series = {Wiley Series in Probability and Statistics: Probability and
              Statistics},
  title = {Convergence of probability measures},
  year = 1999
}

@article{jakubowski,
author = {Jakubowski, Adam},
year = {1997},
month = {07},
pages = {},
title = {The a.s. Skorohod representation for subsequences in nonmetric spaces},
volume = {42},
journal = {Teoriya Veroyatnostej i Ee Primeneniya}
}

@book{Battaner1996,
  author    = {Battaner, E.},
  title     = {Astrophysical Fluid Dynamics},
  publisher = {Cambridge University Press},
  address   = {Cambridge},
  year      = {1996},
  isbn      = {9780521437479},
  url       = {https://www.cambridge.org/9780521437479},
}

@article{2002DomainDependence,
  author    = {Feireisl, Eduard and Novotn{\'y}, Anton{\'i}n and Petzeltov{\'a}, Hana},
  title     = {On the domain dependence of solutions to the compressible
               {N}avier--{S}tokes equations of a barotropic fluid},
  journal   = {Mathematical Methods in the Applied Sciences},
  volume    = {25},
  number    = {12},
  pages     = {1045--1073},
  year      = {2002},
  publisher = {John Wiley \& Sons},
  doi       = {10.1002/mma.333},
}

@book{davidson2001mhd,
  title={An Introduction to Magnetohydrodynamics},
  author={Davidson, P. A.},
  year={2001},
  publisher={Cambridge University Press}
}

@book{taylor1997error,
  title={An Introduction to Error Analysis},
  author={Taylor, John R.},
  edition={2},
  year={1997},
  publisher={University Science Books}
}

@book{clayton1983stellar,
  title={Principles of Stellar Evolution and Nucleosynthesis},
  author={Clayton, Donald D.},
  year={1983},
  publisher={University of Chicago Press}
}

\end{document}